\DeclareMathOperator*{\argmax}{arg\,max}
\def\maketag@@@#1{\hbox{\m@th\normalfont\normalsize#1}}
\newtheorem{thm}{Theorem}
\newtheorem{lem}{Lemma}
\newtheorem{exam}{Example}
\newtheorem{asmp}{Assumption}
\def\rm{\mathrm}
\begin{document}

\bibliographystyle{apalike}
\setcitestyle{round,colon,aysep={,},yysep={;}}

\title{Scenario-decomposition Solution Framework for \\ Nonseparable Stochastic Control Problems}
\author{
Xin Huang
\thanks{Department of Systems Engineering and Engineering Management, The Chinese University of Hong Kong, Shatin, Hong Kong (e-mails: huangxin@se.cuhk.edu.hk; zylong@cuhk.edu.hk).}
\and Duan Li
\thanks{Corresponding author. D. Li is with School of Data Science, City University of Hong Kong, Kowloon, Hong Kong (e-mail: dli226@cityu.edu.hk). The research of this author is supported by Hong Kong Research Grants Council under grant 11200219.}
\and Daniel Zhuoyu Long
\footnotemark[1]
}

\maketitle

\begin{abstract}
    When stochastic control problems do not possess separability and/or monotonicity, the dynamic programming pioneered by Bellman in 1950s fails to work as a time-decomposition solution method. Such cases have posted a great challenge to the control society in both theoretical foundation and solution methodologies for many years. With the help of the progressive hedging algorithm  proposed by Rockafellar and Wets in 1991, we develop a novel scenario-decomposition solution framework for stochastic control problems which could be nonseparable and/or non-monotonic, thus extending the reach of stochastic optimal control. We discuss then some of its promising applications, including online quadratic programming problems and dynamic portfolio selection problems with smoothing properties.
\end{abstract}
{\bf Keywords:} Nonseparable stochastic control, scenario decomposition, progressive hedging algorithm, online quadratic programming, dynamic portfolio selection.


\section{Introduction}\label{sec:introduction}
Stochastic control problems can be, in general, formulated as follows,
\begin{IEEEeqnarray*}{ccl}
~(\mathcal{P})~~ & \min\limits_{u_t, t=0, \ldots, T-1} & ~\mathbb{E}[J(x_0,u_0,x_1,u_1, \ldots, x_{T-1},u_{T-1},x_T)]\\
& {\rm s.t.} & ~x_{t+1} = f_t(x_t,u_t, \xi_t),\\
& & ~g_t(x_t,u_t) \leq 0,~g_T(x_T) \leq 0,\\
& & ~t = 0, 1, \ldots, T-1,
\end{IEEEeqnarray*}
where $x_t$ $\in$ $\mathbb{R}^m$ is the state with $x_0$ given, $u_t$ $\in$ $\mathbb{R}^n$ is the control, and $g_t(x_t,u_t)\leq 0$ and $g_T(x_T)\leq 0$ represent, respectively, the  running constraints on states and controls, and the constraint on the terminal state. Moreover, $\xi_t$ $\in$ $\mathbb{R}^p$ is a white noise vector, and $f_t:\mathbb{R}^m\times\mathbb{R}^n\times\mathbb{R}^p\rightarrow\mathbb{R}^m$ is the system dynamics. Thus, the system under consideration is of a Markovian property.
The performance measure $J$ is {\it backward separable} if there exist functions $\phi_{t}:$ $\mathbb{R}^m\times \mathbb{R}^n \times \mathbb{R}
\rightarrow \mathbb{R}$, $t=0, 1, \ldots, T-1$, and $\phi_{T}:$ $\mathbb{R}^m
\rightarrow \mathbb{R}$ such that
\begin{IEEEeqnarray*}{c}
J=\phi_{0}(x_0,u_0, \phi_{1}(x_1,u_1,
\phi_{2}(\ldots
\phi_{T-2}(x_{T-2},u_{T-2}, \phi_{T-1}(x_{T-1},u_{T-1},
\phi_T(x_T)))\ldots ))).
\end{IEEEeqnarray*}
The backward separable objective function $J$ is then said {\it backward monotonic} if for all $t$ = 0, 1, $\ldots$, $T-1$, the condition
\begin{align*}
\phi_{t}(\hat{x}_t,\hat{u}_t, \phi_{t+1}(\ldots
\phi_{T-1}(\hat{x}_{T-1},\hat{u}_{T-1}, \phi_T(\hat{x}_T))\ldots)) \leq
\phi_{t}(\tilde{x}_t,\tilde{u}_t, \phi_{t+1}(\ldots
\phi_{T-1}(\tilde{x}_{T-1},\tilde{u}_{T-1}, \phi_T(\tilde{x}_T))\ldots))
\end{align*}
implies the following: for
any triple $(x_{t-1},u_{t-1},\xi_{t-1})$ such that $\hat{x}_t = \tilde{x}_t = f_{t-1}(x_{t-1},u_{t-1}, \xi_{t-1})$, we have
\begin{align*}
&\phi_{t-1}(
x_{t-1},u_{t-1},\phi_{t}(\hat{x}_t,\hat{u}_t,\phi_{t+1}(\ldots
\phi_{T-1}(\hat{x}_{T-1},\hat{u}_{T-1}, \phi_T(\hat{x}_T))\ldots)))\leq \\
&\phi_{t-1}(
x_{t-1},u_{t-1},\phi_{t}(\tilde{x}_t,\tilde{u}_t,\phi_{t+1}(\ldots
\phi_{T-1}(\tilde{x}_{T-1},\tilde{u}_{T-1},\phi_T(\tilde{x}_T))\ldots))).
\end{align*}
When $(\mathcal{P})$ satisfies both the separability and the monotonicity as defined above, the celebrated dynamic programming (DP) \cite{Bellman1952} is a powerful time-decomposition solution approach, which is based on the principle of optimality.

There exist, however, a plenty of problems of interests that do not satisfy these fundamental requirements in DP. One notorious nonseparable case is the variance minimization problem (see \cite{white1974dynamic} and \cite{li2003variance}). The obstacle is mainly due to that the variance operation, unlike the expectation operator, does not satisfy the tower property along the time horizon. The variance minimization naturally emerges in the dynamic mean-variance (MV) portfolio selection problem. After many years of struggle, \cite{li2000optimal} finally solves it by embedding the original nonseparable problem into a family of separable auxiliary problems that are analytically solvable by DP. \cite{sniedovich1986c} and \cite{domingo1993experiments} in the early days consider nonseparable problems with the objective function of the form $h(u) = \psi(v(u), z(u))$, where both $v$ and $z$ are functions in additive forms w.r.t. stages. Under the assumption that $\psi$ is pseudo-concave w.r.t. its arguments, the authors of \cite{sniedovich1986c} and \cite{domingo1993experiments} develop the so-called C-programming to convert the primal problem into a separable version which could be handled by DP and report its applications in the variance minimization (see also \cite{sniedovich1987class}) and fractional programming (see also \cite{sniedovich1990solution}). \cite{carraway1990generalized} proposes a generalized DP for the multi-criteria optimization problem that violates the monotonicity. \cite{li1990new}, \cite{li1991extension}, and \cite{li1990multiple}
consider a class of nonseparable problems where the nonseparable objective function is a monotone function of several separable sub-objectives. Among these three papers, the first two deal with the deterministic cases, whereas the last one deals with the stochastic counterpart. They introduce the concept of $k$th-order separability and convert the primal nonseparable problem into a separable $k$-objective optimization problem which could be solved by the multi-objective DP \cite{Envelope}. They further develop conditions under which a specific Pareto solution is optimal to the original nonseparable problem. Moreover, \cite{li1997cost} investigates a nonseparable cost smoothing problem for the discrete-time deterministic linear-quadratic control.

Different from the above works, this paper aims to develop a novel solution framework through the \emph{scenario decomposition}, which is fundamentally distinct from the methods governed by \emph{time-decomposition}-based DP. Our solution framework relies on the progressive hedging algorithm (PHA) pioneered in \cite{rockafellar1991scenarios}. In contrast to DP, our PHA-oriented solution scheme can be applied to stochastic control problems which may not be separable and/or monotonic. We emphasize that PHA has not been fully recognized up to today for its powerful capability in dealing with the non-separability or non-monotonicity in stochastic control. We will further apply the newly-developed solution scheme to two nonseparable (thus non-tractable by DP) real-world applications: online quadratic programming (QP) and a novel variation of the dynamic portfolio selection problem with smoothing properties. Interestingly, the considered MV problem with smoothing feature could be embedded into a series of auxiliary problems that turn out to be a concrete type of our proposed online QP model.

The rest of the paper proceeds as follows. We build up in Section 2 the scenario-decomposition solution framework through adopting PHA on general stochastic control problems, where the information flow follows a tree structure. We then demonstrate its prominent application to the online QP problem in Section 3. On top of that, we also apply this solution methodology to  dynamic portfolio selection problems and their novel variations with smoothing features, and analyze experimental results in Section 4. Finally, we conclude the paper in Section 5.

\section{Solution Approach by Scenario Decomposition}\label{PHA_section}

We consider in this paper the problem $(\mathcal{P})$ with a Markovian system. As the most prominent feature of our new formulation, the objective function in general could be nonseparable and/or non-monotonic. On the other hand, we confine the structure of the information flow to a tree form, where the system randomness $\boldsymbol{\xi}=\{\xi_0,\xi_1,\ldots,\xi_{T-1}\}$ is realized stage by stage, and a series of realizations of $\xi_t$'s will form a \emph{scenario} of the tree, indexed by $i$. From the scenario analysis prospective, the dynamic stochastic control problem could be armed with a scenario tree in order to reflect its information flow for the underlying uncertainties.
Figure \ref{tree} exemplifies a specific three-stage tree structure, where $\boldsymbol{\xi}$ is realized successively from $\xi_0$ to $\xi_1$ and finally to $\xi_2$, thus leading to seven possible scenarios (paths of $\boldsymbol{\xi}$) in total. The number in each circle node represents a possible value of the disturbance realized right before that stage. Note that any parent node (starting from the square root node) could in general result in different numbers of children nodes. In contrast to DP whose effectiveness comes from the time decomposition, the solution power by PHA that we adopt in this paper roots in the scenario decomposition. Invented almost thirty years ago, PHA has been successfully applied to several application areas including power systems scheduling problems (see \cite{dos2009practical} among others) and water resource planning problems (see, e.g., \cite{carpentier2013long}). For more details on the general methodology of PHA, please refer to \cite{rockafellar1991scenarios}.

\begin{figure}[!t]
\begin{center}
\scalebox{1}{
\begin{tikzpicture}[
every node/.style={scale=0.8},
level 1/.style={sibling distance=4cm, level distance=1cm},
level 2/.style={sibling distance=2cm, level distance=1cm},
level 3/.style={sibling distance=2cm, level distance=1cm},
]
\node (Root) [rectangle,draw,minimum size=0.5cm] {} 
  child {node (xi0_1) [circle,draw,minimum size=0.8cm]  {$\xi_{0}$} 
    child {node (xi1_1) [circle,draw,minimum size=0.8cm]  {$\xi_{1}$} 
      child {node (xi2_1) [circle,draw,minimum size=0.8cm]  {$\xi_{2}$}} 
    }
    child {node (xi1_2) [circle,draw,minimum size=0.8cm]  {$\xi_{1}$} 
      child {node (xi2_2) [circle,draw,minimum size=0.8cm]  {$\xi_{2}$}} 
      child {node (xi2_3) [circle,draw,minimum size=0.8cm]  {$\xi_{2}$}}
    }
    child {node (xi1_3) [circle,draw,minimum size=0.8cm]  {$\xi_{1}$} 
      child {node (xi2_4) [circle,draw,minimum size=0.8cm]  {$\xi_{2}$}} 
    }
  }
  child {node (xi0_2) [circle,draw,minimum size=0.8cm]  {$\xi_{0}$} 
    child {node (xi1_4) [circle,draw,minimum size=0.8cm]  {$\xi_{1}$} 
      child {node (xi2_5) [circle,draw,minimum size=0.8cm]  {$\xi_{2}$}} 
    }
    child {node (xi1_5) [circle,draw,minimum size=0.8cm]  {$\xi_{1}$} 
      child {node (xi2_6) [circle,draw,minimum size=0.8cm]  {$\xi_{2}$}} 
      child {node (xi2_7) [circle,draw,minimum size=0.8cm]  {$\xi_{2}$}}
    }
  };
   \begin{scope}[every node/.style={right,scale=0.8}]
     \path (Root  -| xi2_7) ++(2mm,0) node (t0) {} ++(5mm,0) node {$t=0$};
     \path (xi0_1 -| xi2_7) ++(2mm,0) node (t1) {} ++(5mm,0) node {$t=1$};
     \path (xi1_1 -| xi2_7) ++(2mm,0) node (t2) {} ++(5mm,0) node {$t=2$};
     \path (xi2_1 -| xi2_7) ++(2mm,0) node (t3) {} ++(5mm,0) node {$T=3$};
   \end{scope}


\node [below = 0.1cm of xi2_1,align=center] {$i_1$};
\node [below = 0.1cm of xi2_2,align=center] {$i_2$};
\node [below = 0.1cm of xi2_3,align=center] {$i_3$};
\node [below = 0.1cm of xi2_4,align=center] {$i_4$};
\node [below = 0.1cm of xi2_5,align=center] {$i_5$};
\node [below = 0.1cm of xi2_6,align=center] {$i_6$};
\node [below = 0.1cm of xi2_7,align=center] {$i_7$};

\end{tikzpicture}%
}
\end{center}
\caption{A scenario tree with three stages and seven scenarios.}
\label{tree}
\end{figure}
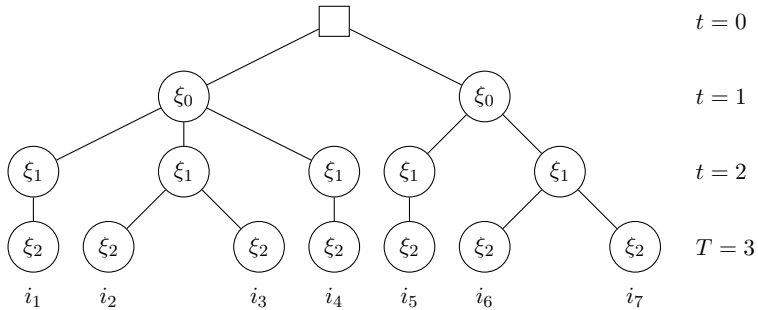

Let us denote by $\mathcal{I}$ the scenario set which consists of all possible scenarios, and denote by $\boldsymbol{\xi}^i=\{\xi_0^i,\xi_1^i,\ldots,\xi_{T-1}^i\}$ the realizations of disturbance under the scenario $i\in\mathcal{I}$. Assuming the occurring probability of scenario $i$ to be $\rho_i$ that is fixed at time $0$, we can rewrite the objective of $(\mathcal{P})$ as $\min\sum_{i\in\mathcal{I}}\rho_i J_i$, where $J_i$ denotes the sub-objective under $\boldsymbol{\xi}^i$. Then it is natural to decompose problem $(\mathcal{P})$ into a family of \emph{scenario subproblems} and  consider the following individual scenario subproblem for each $i\in\mathcal{I}$,
\begin{IEEEeqnarray*}{ccl}
  ~(\mathcal{P}^i)~~ & \min_{u_t,\forall t} & ~J_i = J(x_0^i,u_0,x_1^i,u_1,\ldots,x_{T-1}^i,u_{T-1},x_T^i) \\
  & \rm{s.t.} & ~x_{t+1}^i = f_t(x_t^i,u_t, \xi_t^i),~x_0^i=x_0,\\
  && ~g_t(x_t^i,u_t) \leq 0,~g_T(x_T^i) \leq 0,\\
  && ~t = 0, 1, \ldots, T-1,
\end{IEEEeqnarray*}
which is a {\it deterministic} optimal control problem, and should be much easier to solve than the original stochastic one. In this paper, we further assume that each $(\mathcal{P}^i)$ is convex w.r.t. the control variable $\mathbf{u}=(u_0',u_1',\ldots,u_{T-1}')'\in\mathbb{R}^{nT}$. Although the optimal solution of $(\mathcal{P}^i)$ satisfies all the \emph{admissible} constraints of the primal problem $(\mathcal{P})$, it is \emph{not implementable} in reality, since we have ``stolen'' the \emph{future} information (i.e., the future realization of $\boldsymbol{\xi}$) when solving each scenario subproblem at time $0$. In other words, the scenario-specific solutions violate the so-called \emph{nonanticipative} constraint which is either explicitly or implicitly implied in any stochastic control problem. To force any admissible solution to meet nonanticipativity, the \emph{scenario bundles}, as a partition of $\mathcal{I}$, are formed at each time according to the scenario tree of the underlying problem. Graphically speaking, scenarios passing through each node at a certain time stage are grouped together to form a bundle. In Figure \ref{tree}, for instance, at time 0 all the scenarios form a single bundle that is the scenario set itself and we denote this partition by $\mathcal{I}_0=\{\mathcal{I}_{0,1}\}=\{\{i_1,\ldots,i_7\}\}$; and when $t=1$ we have two bundles together to form $\mathcal{I}_1=\{\mathcal{I}_{1,1},\mathcal{I}_{1,2}\}=\{\{i_1,\ldots,i_4\},\{i_5,\ldots,i_7\}\}$; and finally for $t=2$ we have five bundles to form the partition of $\mathcal{I}$ at that time, i.e., $\mathcal{I}_2=\{\mathcal{I}_{2,1},\mathcal{I}_{2,2},\mathcal{I}_{2,3},\mathcal{I}_{2,4},\mathcal{I}_{2,5}\}
=\{\{i_1\},\{i_2,i_3\},\{i_4\},\{i_5\},\{i_6,i_7\}\}$. The nonanticipativity naturally requires any \emph{implementable} policy to react the same to all indifferent scenarios (the scenarios from the same bundle), and this is achieved by taking conditional expectations on the scenario-specific solutions from the related bundle. More specifically, the implementable control at time $t$, if the scenario $i$ occurs, is computed through
\begin{align}
  \hat{u}^{i}_t= \sum_{j\in \mathcal{I}_{t,l}}\frac{\rho_j}{\sum_{j'\in \mathcal{I}_{t,l}}\rho_{j'}}u_t^{j},~i\in \mathcal{I}_{t,l},~l=1,\ldots,|\mathcal{I}_t|,
  \label{conditional_expectation}
\end{align}
where $u_t^{j}$ is the scenario-$j$-based admissible control at time $t$, and $|\mathcal{I}_t|$ is the number of scenario bundles in the partition $\mathcal{I}_t$. Note that $|\mathcal{I}_t|$ determines the number implementable controls corresponding to different realizations at that time. In fact, the above procedure in \eqref{conditional_expectation}
can be characterized in a linear transformation $\hat{\mathbf{u}}_t=\mathbf{T}_t\mathbf{u}_t$, where $\hat{\mathbf{u}}_t=((\hat{{u}}_t^1)',(\hat{{u}}_t^2)',\ldots,(\hat{{u}}_t^{|\mathcal{I}|})')'
\in\mathbb{R}^{n|\mathcal{I}|}$, ${\mathbf{u}}_t=(({{u}}_t^1)',({{u}}_t^2)',\ldots,({{u}}_t^{|\mathcal{I}|})')'
\in\mathbb{R}^{n|\mathcal{I}|}$, and the projection matrix $\mathbf{T}_t$ can be easily build up by scenario probabilities based on the structure of $\mathcal{I}_t$.
Then the overall linear mapping is
\begin{align*}
  \left(
  \begin{array}{c}
    \hat{\mathbf{u}}_0\\
    \hat{\mathbf{u}}_1\\
    \vdots\\
    \hat{\mathbf{u}}_{T-1}
  \end{array}
  \right)=
  \left(
  \begin{array}{cccc}
    \mathbf{T}_0 & 0 & \cdots & 0 \\
    0 & \mathbf{T}_1 & \cdots & 0 \\
    \vdots & \vdots & \ddots & \vdots \\
    0 & 0 & \cdots & \mathbf{T}_{T-1}
  \end{array}
  \right)
  \left(
  \begin{array}{c}
    {\mathbf{u}}_0\\
    {\mathbf{u}}_1\\
    \vdots\\
    {\mathbf{u}}_{T-1}
  \end{array}
  \right).
\end{align*}

The beauty of PHA lies in its augmented Lagrangian formulation that progressively aggregates the scenario-specific solutions into an implementable one and forces them to converge to the optimal solution of the primal problem $(\mathcal{P})$, which are both admissible and implementable.
More precisely, it deals with an augmented Lagrangian problem at each iteration $\nu=0,1,\ldots$, which is constructed by adding a linear Lagrangian term and a quadratic penalty term to the scenario-specific objective function in order to penalize any utilization of the anticipative information from the future. More precisely, we solve the following augmented Lagrangian problem in the $\nu$th iteration for each $i\in\mathcal{I}$,
\begin{IEEEeqnarray*}{ccl}
  ~(\mathcal{P}^{i,\nu})~~ & \min_{\mathbf{u}} & ~J(x_0^i,u_0,x_1^i,u_1,\ldots,x_{T-1}^i,u_{T-1},x_T^i)
  +\mathbf{u}'\mathbf{w}^{i,\nu}+\frac{1}{2}\alpha|\mathbf{u}-\hat{\mathbf{u}}^{i,\nu}|_2^2, \\
  & \rm{s.t.} & ~x_{t+1}^i = f_t(x_t^i,u_t, \xi^i_t),~x_0^i=x_0,\\
  && ~ g_t(x_t^i,u_t) \leq 0,~g_T(x_T^i) \leq 0,\\
  && ~ t = 0, 1, \ldots, T-1,
\end{IEEEeqnarray*}
where we define, for compactness, $\mathbf{u}=(u_0',u_1',\ldots,u_{T-1}')'\in\mathbb{R}^{nT}$ as the overall control vector, and
\begin{align}
  \hat{\mathbf{u}}^{i,\nu}=((\hat{u}^{i,\nu}_0)',(\hat{u}^{i,\nu}_1)',\ldots,(\hat{u}^{i,\nu}_{T-1})')'
  \in\mathbb{R}^{nT}
\end{align}
is a given implementable control for $(\mathcal{P}^{i,\nu})$. Let us denote the optimal solution of $(\mathcal{P}^{i,\nu})$ by
\begin{align}
  \mathbf{u}^{i,\nu+1}=((u^{i,\nu+1}_0)',(u^{i,\nu+1}_1)',\ldots,(u^{i,\nu+1}_{T-1})')',
  \in\mathbb{R}^{nT}
  \label{nu+1_sol}
\end{align}
which is a new scenario-based solution. We then aggregate all $\mathbf{u}^{i,\nu+1}$, $i\in\mathcal{I}$, into a new implementable control, denoted by
\begin{align}
  \hat{\mathbf{u}}^{i,\nu+1}=((\hat{u}^{i,\nu+1}_0)',(\hat{u}^{i,\nu+1}_1)',\ldots,
  (\hat{u}^{i,\nu+1}_{T-1})')'\in\mathbb{R}^{nT},
  \label{nu+1_hat}
\end{align}
through the componentwise calculations of (\ref{conditional_expectation}), or in the following compact way: we first gather $u_t^{i,\nu+1}$ of all $i$ to form
\begin{align}
  \mathbf{u}_t^{\nu+1}=((u_t^{1,\nu+1})',(u_t^{2,\nu+1})',\ldots,(u_t^{|\mathcal{I}|,\nu+1})')'
  \in\mathbb{R}^{n|\mathcal{I}|},
  \label{nu+1_project1}
\end{align}
and conduct the transformation $\hat{\mathbf{u}}_t^{\nu+1}=\mathbf{T}_t\mathbf{u}_t^{\nu+1}$, where
\begin{align}
  \hat{\mathbf{u}}_t^{\nu+1}=((\hat{u}_t^{1,\nu+1})',(\hat{u}_t^{2,\nu+1})',\ldots,
  (\hat{u}_t^{|\mathcal{I}|,\nu+1})')'\in\mathbb{R}^{n|\mathcal{I}|};
  \label{nu+1_project2}
\end{align}
and this is done for every $t=0,1,\ldots,T-1$. We then pick up the $i$th component of $\hat{\mathbf{u}}_t^{\nu+1}$, $\hat{u}_t^{i,\nu+1}$, for all $t$, to serve as $\hat{\mathbf{u}}^{i,\nu+1}$ in $(\mathcal{P}^{i,\nu+1})$. When $\nu=0$, all the initial $\hat{\mathbf{u}}^{i,0}$, $i\in\mathcal{I}$, are attained from $\mathbf{u}^{i,0}$, $i\in\mathcal{I}$, following the above procedure, where $\mathbf{u}^{i,0}$ could be selected as the optimal solution of $(\mathcal{P}^i)$. In $(\mathcal{P}^{i,\nu})$,  the  penalty parameter $\alpha>0$ is predetermined, and the Lagrangian multiplier $\mathbf{w}^{i,\nu}=((w^{i,\nu}_0)',(w^{i,\nu}_1)',\ldots,(w^{i,\nu}_{T-1})')'\in\mathbb{R}^{nT}$,
for every $i$, satisfies the recursion below,
\begin{align}
  \mathbf{w}^{i,\nu+1}=\mathbf{w}^{i,\nu}+\alpha (\mathbf{u}^{i,\nu+1}-\hat{\mathbf{u}}_t^{i,\nu+1}),
  \label{w_update}
\end{align}
where $\mathbf{w}^{i,0}$ is set at zero. The solution process repeats until a stopping criterion is satisfied. We now provide the convergence result as follows.

\begin{thm}[Convergence of PHA, \cite{rockafellar1991scenarios}]
If all the scenario subproblems $(\mathcal{P}^{i})$ are convex w.r.t. $\mathbf{u}$ and have been solved exactly, and $\{\mathbf{u}: g_t(x_t,u_t) \leq 0,\forall t\}$ is a convex set under any $x_t$, then the sequence $\{\hat{\mathbf{u}}^{i,\nu+1}\}_\nu$, generated by $(\mathcal{P}^{i,\nu})$, $\nu=0,1,\ldots$, converges to the real optimal $\mathbf{u}^{i,*}$, $i\in\mathcal{I}$, of the primal problem $(\mathcal{P})$. And on the other hand, the sequence $\{\mathbf{w}^{i,\nu+1}\}_\nu$ converges to $\mathbf{w}^{i,*}$, which is also known as the shadow price for each scenario $i$ of the problem. Moreover, the solution quality is guaranteed continuously improved,
in the sense that
\begin{align}
\sum_{i\in\mathcal{I}}\rho_i\left(|\hat{\mathbf{u}}^{i,\nu+1}-\mathbf{u}^{i,*}|_2^2
+\frac{1}{\alpha^2}|\mathbf{w}^{i,\nu+1}-\mathbf{w}^{i,*}|_2^2\right) \leq
\sum_{i\in\mathcal{I}}\rho_i\left(|\hat{\mathbf{u}}^{i,\nu}-\mathbf{u}^{i,*}|_2^2
+\frac{1}{\alpha^2}|\mathbf{w}^{i,\nu}-\mathbf{w}^{i,*}|_2^2\right),
\end{align}
and the equality is finally achieved when $(\hat{\mathbf{u}}^{i,\nu+1},\mathbf{w}^{i,\nu+1})=(\mathbf{u}^{i,*},\mathbf{w}^{i,*})$ for some $\nu$.
\label{PHA_thm}
\end{thm}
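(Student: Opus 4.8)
The plan is to recognize the PHA iteration as an instance of the classical proximal point algorithm applied to a maximal monotone operator, following the partial-inverse viewpoint of \cite{rockafellar1991scenarios}. First I would set up the ambient Hilbert space $\mathcal{L}=\mathbb{R}^{nT|\mathcal{I}|}$ of scenario-indexed control sequences equipped with the $\rho$-weighted inner product $\langle\mathbf{u},\mathbf{v}\rangle=\sum_{i\in\mathcal{I}}\rho_i(\mathbf{u}^i)'\mathbf{v}^i$. Inside $\mathcal{L}$ the nonanticipativity requirement defines a linear subspace $\mathcal{N}$ (controls that are constant across each bundle $\mathcal{I}_{t,l}$), and one checks that the block-diagonal map $\mathbf{u}\mapsto(\mathbf{T}_0\mathbf{u}_0,\ldots,\mathbf{T}_{T-1}\mathbf{u}_{T-1})$ of \eqref{conditional_expectation} is exactly the orthogonal projection $P_{\mathcal{N}}$ onto $\mathcal{N}$; its complement $\mathcal{M}=\mathcal{N}^{\perp}$ is the subspace in which the multipliers $\mathbf{w}^{i,\nu}$ will be shown to reside. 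Because every $(\mathcal{P}^i)$ is convex and the feasible set is convex, the aggregate $F(\mathbf{u})=\sum_{i}\rho_i J_i(\mathbf{u}^i)+\delta_{\mathcal{C}}(\mathbf{u})$ is a closed proper convex function, so $T=\partial F$ is maximal monotone.

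Next I would record the optimality characterization. Minimizing $F$ over $\mathcal{N}$, which is precisely problem $(\mathcal{P})$, is equivalent—using that the normal cone to the subspace satisfies $N_{\mathcal{N}}(\mathbf{u})=\mathcal{M}$ for every $\mathbf{u}\in\mathcal{N}$—to finding a pair with $\mathbf{u}^{*}\in\mathcal{N}$, $\mathbf{w}^{*}\in\mathcal{M}$, and $-\mathbf{w}^{*}\in\partial F(\mathbf{u}^{*})$. Such pairs are precisely the zeros (up to the sign convention fixed by \eqref{w_update}) of Spingarn's \emph{partial-inverse} operator $T_{\mathcal{N}}$ of $T$ relative to $\mathcal{N}$, defined by $v\in T_{\mathcal{N}}(z)\iff P_{\mathcal{N}}v+P_{\mathcal{M}}z\in T(P_{\mathcal{N}}z+P_{\mathcal{M}}v)$, and $T_{\mathcal{N}}$ inherits maximal monotonicity from $T$.

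The crux is then to show that one PHA sweep equals one resolvent step of $T_{\mathcal{N}}$. Introduce the merged iterate $z^{\nu}=\hat{\mathbf{u}}^{\nu}+\tfrac1\alpha\mathbf{w}^{\nu}$, so that $P_{\mathcal{N}}z^{\nu}=\hat{\mathbf{u}}^{\nu}\in\mathcal{N}$ and $P_{\mathcal{M}}z^{\nu}=\tfrac1\alpha\mathbf{w}^{\nu}\in\mathcal{M}$. The first-order condition of $(\mathcal{P}^{i,\nu})$, namely $0\in\partial F(\mathbf{u}^{\nu+1})+\mathbf{w}^{\nu}+\alpha(\mathbf{u}^{\nu+1}-\hat{\mathbf{u}}^{\nu})$, combined with the projection $\hat{\mathbf{u}}^{\nu+1}=P_{\mathcal{N}}\mathbf{u}^{\nu+1}$ and the update \eqref{w_update} in the form $\mathbf{w}^{\nu+1}=\mathbf{w}^{\nu}+\alpha P_{\mathcal{M}}\mathbf{u}^{\nu+1}$, rearranges—via the orthogonal splitting $\mathbf{u}^{\nu+1}=\hat{\mathbf{u}}^{\nu+1}+P_{\mathcal{M}}\mathbf{u}^{\nu+1}$—into the single relation $\alpha(z^{\nu}-z^{\nu+1})\in T_{\mathcal{N}}(z^{\nu+1})$, i.e. $z^{\nu+1}=(I+\tfrac1\alpha T_{\mathcal{N}})^{-1}z^{\nu}$. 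This is where I expect the real work to lie: untangling the minimization, projection, and multiplier steps and verifying that the terms reassemble exactly along the graph of $T_{\mathcal{N}}$, while tracking carefully which pieces land in $\mathcal{N}$ and which in $\mathcal{M}$.

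Finally I would invoke the standard facts that the resolvent $J=(I+\tfrac1\alpha T_{\mathcal{N}})^{-1}$ of a maximal monotone operator is single-valued and firmly nonexpansive, and that the proximal point iteration $z^{\nu+1}=Jz^{\nu}$ converges to a zero $z^{*}$ of $T_{\mathcal{N}}$. Firm nonexpansiveness gives $\|z^{\nu+1}-z^{*}\|\le\|z^{\nu}-z^{*}\|$; decomposing along $\mathcal{N}\oplus\mathcal{M}$ and using $\|z^{\nu}-z^{*}\|^{2}=\|\hat{\mathbf{u}}^{\nu}-\mathbf{u}^{*}\|^{2}+\tfrac1{\alpha^{2}}\|\mathbf{w}^{\nu}-\mathbf{w}^{*}\|^{2}$ in the $\rho$-weighted norm turns this inequality into the displayed monotone-improvement estimate, with equality holding exactly when $z^{\nu}$ is already a fixed point, that is $(\hat{\mathbf{u}}^{i,\nu+1},\mathbf{w}^{i,\nu+1})=(\mathbf{u}^{i,*},\mathbf{w}^{i,*})$. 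Convergence of the proximal point sequence then yields $\hat{\mathbf{u}}^{i,\nu}\to\mathbf{u}^{i,*}$ and $\mathbf{w}^{i,\nu}\to\mathbf{w}^{i,*}$ for each $i\in\mathcal{I}$, completing the argument.
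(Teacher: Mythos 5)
You should first note a structural fact: the paper does not prove this theorem at all --- it is imported verbatim from \cite{rockafellar1991scenarios}, so there is no ``paper's own proof'' to match. Your reconstruction follows exactly the route of the cited source: scenario-indexed controls in the $\rho$-weighted Hilbert space, nonanticipativity as a closed subspace $\mathcal{N}$ with the bundle-conditional expectation \eqref{conditional_expectation} as the orthogonal projection $P_{\mathcal{N}}$, and the PHA sweep recognized as a proximal point step for Spingarn's partial inverse, with firm nonexpansiveness of the resolvent delivering the Fej\'er-type estimate in the norm $\|\hat{\mathbf{u}}-\mathbf{u}^*\|^2+\alpha^{-2}\|\mathbf{w}-\mathbf{w}^*\|^2$. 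That is the standard and correct architecture, and your identification of where the real work lies (reassembling the minimization, projection, and multiplier steps along the graph of the partial inverse) is accurate.

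One concrete slip needs repair, precisely at that crux. The identity you assert, $\alpha(z^{\nu}-z^{\nu+1})\in T_{\mathcal{N}}(z^{\nu+1})$ with $z^{\nu}=\hat{\mathbf{u}}^{\nu}+\tfrac1\alpha\mathbf{w}^{\nu}$, is false for general $\alpha$: if you expand the partial-inverse condition you find the argument of $T$ comes out as $\hat{\mathbf{u}}^{\nu+1}-\alpha P_{\mathcal{M}}\mathbf{u}^{\nu+1}$ rather than $\mathbf{u}^{\nu+1}$, and the $\mathcal{M}$-component of the value carries $+\tfrac1\alpha\mathbf{w}^{\nu}$ where the first-order condition of $(\mathcal{P}^{i,\nu})$ supplies $-\mathbf{w}^{\nu}$; both mismatches disappear only at $\alpha=-1$. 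The reason is that partial inversion does not commute with scalar multiplication, $(\lambda T)_{\mathcal{N}}\neq\lambda\,T_{\mathcal{N}}$, so the penalty parameter cannot be pulled out of $T_{\mathcal{N}}$ as a proximal stepsize. The correct bookkeeping, under the paper's sign conventions in $(\mathcal{P}^{i,\nu})$ and \eqref{w_update}, is to set $\tilde z^{\nu}=\hat{\mathbf{u}}^{\nu}-\tfrac1\alpha\mathbf{w}^{\nu}$ and to apply Spingarn's scheme to the rescaled operator $\alpha^{-1}T$: the subproblem optimality condition says exactly that $\mathbf{u}^{\nu+1}=(I+\alpha^{-1}T)^{-1}\tilde z^{\nu}$ with residual $s^{\nu+1}=\tilde z^{\nu}-\mathbf{u}^{\nu+1}\in\alpha^{-1}T(\mathbf{u}^{\nu+1})$, and the projection plus multiplier update give $\tilde z^{\nu+1}=P_{\mathcal{N}}\mathbf{u}^{\nu+1}+P_{\mathcal{M}}s^{\nu+1}$, which is precisely the unit-stepsize proximal point iteration $\tilde z^{\nu+1}=(I+(\alpha^{-1}T)_{\mathcal{N}})^{-1}\tilde z^{\nu}$. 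Since the weighted norm is invariant under $\mathbf{w}\mapsto-\mathbf{w}$, the rest of your argument --- firm nonexpansiveness, the displayed monotone-improvement inequality with its equality case, and convergence of $(\hat{\mathbf{u}}^{i,\nu},\mathbf{w}^{i,\nu})$ to an optimal primal--multiplier pair --- goes through unchanged. Two smaller caveats: existence of the pair $(\mathbf{u}^{*},\mathbf{w}^{*})$ (equivalently, nonemptiness of the zero set of the partial inverse) requires a constraint qualification that the theorem statement leaves implicit, and without strict convexity the limit is \emph{an} optimal solution rather than ``the'' optimal one; the proximal point sequence converges to a single zero, but which one may depend on the initialization.
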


\section{Online Quadratic Programming}\label{QP_section}

Quadratic programming (QP) is a fundamental subject in mathematical programming with wide spectra of applications in various fields, including business and finance (see \cite{QP_Survey} for a survey). Although QP has been investigated broadly and deeply, almost all of the studies up to today have been confined in a deterministic framework. Recently, \cite{Online_LP} studies the online linear programming (LP), where the constraint matrix is revealed column by column along with the corresponding coefficients in the objective function. In this section, we will extend the online programming from online LP to online QP and solve it by our newly proposed solution scheme introduced in Section \ref{PHA_section}. More precisely, we consider an online version of a general QP,
\begin{IEEEeqnarray*}{ccl}
~(\mathcal{Q})~~ & \min\limits_{u_t,\forall t} & ~\mathbb{E}\left[\sum\nolimits_{i,j=0}^{T} \frac{1}{2}x_i' Q_{ij} x_j + \sum\nolimits_{t=0}^{T}x_t'c_{t}
+ \sum\nolimits_{i,j=0}^{T-1} \frac{1}{2}u_i' R_{ij} u_j + \sum\nolimits_{t=0}^{T-1}u_t'd_{t}\right] \\
& \rm{s.t.} & ~x_{t+1}=A_tx_t+B_t u_t+\xi_t,~t=0,1,\ldots,T-1,
\end{IEEEeqnarray*}
where $x_t\in\mathbb{R}^m$ is the state with $x_0$ given, $u_t\in\mathbb{R}^n$ is the control, and $\xi_t \in\mathbb{R}^m$ is the system randomness at time $t$ following some discrete distribution $D_{\xi_t}$ with $|D_{\xi_t}|$ possible outcomes and the probability $\pi_t^k$ for each $k=1,2,\ldots,|D_{\xi_t}|$. We further assume that $\xi_t$'s are independent across time stages. Therefore, there are in total $\prod_{t=0}^{T-1}|D_{\xi_t}|$ scenarios for this $T$-period problem, and each scenario reflects a path of $\xi_t$'s along the time horizon, and the scenario probability $\rho_i$ is calculated by the product of the involved $\pi_t^k$'s. The assumptions on the coefficients will be stated later. Note that the system disturbance $\xi_t$ is realized after the decision is made at time $t$. To see the \emph{online} nature of $(\mathcal{Q})$, we can aggregate all the constraints into the following compact form,
\begin{align}
  &\left(
  \begin{array}{cccccc}
  I_m & 0  & 0  & \cdots & 0 & 0\\
  -A_1 & I_m & 0 & \cdots & 0 & 0 \\
  0 & -A_2 & I_m & \cdots & 0 & 0 \\
  \vdots & \vdots & \ddots & \ddots & \vdots & \vdots\\
  0 & 0 & 0 & \cdots & -A_{T-1} & I_m
  \end{array}
  \right)\left(
  \begin{array}{c}
    x_1\\
    x_2\\
    x_3\\
    \vdots\\
    x_T
  \end{array}
  \right)
  -\left(
  \begin{array}{cccc}
  B_{0} & 0 & \cdots & 0\\
  0 & B_{1} & \cdots & 0 \\
  \vdots & \vdots & \ddots & \vdots\\
  0 & 0 & \ldots & B_{T-1}
  \end{array}
  \right)\left(
  \begin{array}{c}
    u_0\\
    u_1\\
    \vdots\\
    u_{T-1}
  \end{array}
  \right)\nonumber\\
  &=\left(
  \begin{array}{c}
    A_0x_0\\
    0\\
    \vdots\\
    0
  \end{array}
  \right)+\left(
  \begin{array}{c}
    \xi_0\\
    \xi_1\\
    \vdots\\
    \xi_{T-1}
  \end{array}
  \right),
  \label{QP_constraint}
\end{align}
where $I_m$ is an identity matrix of size $m$. It becomes clear now that the right hand side of the constraints (\ref{QP_constraint}) is fully uncertain at time $0$, and it becomes partially deterministic when time evolves. For instance, at time 1 (before $u_1$ is made), only the first constraint becomes deterministic. In general, at time $t$ (before $u_t$ is determined), the first $t$ constraints are realized. Although we can observe the states when the system randomness is gradually achieved, it is often the case that we need to make optimal decisions before that happens. On the other hand, the objective function of $(\mathcal{Q})$, in general, includes cross terms on $x_t$'s and $u_t$'s in terms of $t$, respectively. These interactions among time stages make $(\mathcal{Q})$ a concrete nonseparable instance of $(\mathcal{P})$. Let us take a deeper look on its compact form and make some assumptions on its coefficients,
\begin{IEEEeqnarray*}{ccl}
  ~(\mathcal{Q}_c)~~&\min_{\mathbf{u}} &~\mathbb{E}\left[\frac{1}{2}\mathbf{x}'\mathbf{Q}\mathbf{x}
  +\mathbf{x}'\mathbf{c}+\frac{1}{2}\mathbf{u}'\mathbf{R}\mathbf{u}+\mathbf{u}'\mathbf{d}\right]\\
  & {\rm s.t.} & ~\mathbf{x}=\mathbf{A}+\mathbf{B}\mathbf{u}+\mathbf{C}\boldsymbol{\xi},
\end{IEEEeqnarray*}
where $\mathbf{x}=(x_0',x_1',\ldots,x_{T-1}',x_T')'\in\mathbb{R}^{m(T+1)}$, $\mathbf{u}=(u_0',u_1',\ldots,u_{T-1}')'\in\mathbb{R}^{nT}$, and $\boldsymbol{\xi}=(\xi_0',\xi_1',\ldots,\xi_{T-1}')'\in\mathbb{R}^{mT}$, and the coefficient matrices are given by
\begin{align*}
  &\mathbf{Q}=\left(
  \begin{array}{cccc}
    Q_{00} & Q_{01} & \cdots & Q_{0T} \\
    Q_{10} & Q_{11} & \cdots & Q_{1T} \\
    \vdots & \vdots & \ddots & \vdots \\
    Q_{T0} & Q_{T1} & \cdots & Q_{TT}
  \end{array}
  \right)\in\mathbb{R}^{m(T+1)\times m(T+1)},\\
  &\mathbf{R}=\left(
  \begin{array}{cccc}
    R_{00} & R_{01} & \cdots & R_{0(T-1)} \\
    R_{10} & R_{11} & \cdots & R_{1(T-1)} \\
    \vdots & \vdots & \ddots & \vdots \\
    R_{(T-1)0} & R_{(T-1)1} & \cdots & R_{(T-1)(T-1)}
  \end{array}
  \right)\in\mathbb{R}^{nT\times nT},
\end{align*}
$\mathbf{c}=(c_0',c_1',\ldots,c_T')'\in\mathbb{R}^{m(T+1)}$,  $\mathbf{d}=(d_0',d_1',\ldots,d_{T-1}')'\in\mathbb{R}^{nT}$,
\begin{align*}
  &\mathbf{A}=\left(x_0',(A_0x_0)',(A_1A_0x_0)',\ldots,(\prod_{t=0}^{T-1}A_tx_0)'\right)'
  \in\mathbb{R}^{m(T+1)},
\end{align*}
\begin{align*}
  &\mathbf{B}=\left(
  \begin{array}{cccc}
    {0} & {0} & \ldots & {0}\\
    B_0 & {0} & \ldots & {0}\\
    A_1B_0 & B_1 & \ldots & {0}\\
    \vdots & \vdots & \ddots & \vdots \\
    (\prod\limits_{t=1}^{T-1}A_t)B_0 & (\prod\limits_{t=2}^{T-1}A_t)B_1 &
    \ldots & B_{T-1}
  \end{array}
  \right)\in\mathbb{R}^{m(T+1)\times nT},
\end{align*}
and
\begin{align*}
  &\mathbf{C}=\left(
  \begin{array}{cccc}
    {0} & {0} & \ldots & {0}\\
    I_m & {0} & \ldots & {0}\\
    A_1 & I_m & \ldots & {0}\\
    \vdots & \vdots & \ddots & \vdots \\
    \prod_{t=1}^{T-1}A_t & \prod_{t=2}^{T-1}A_t &
    \ldots & I_m
  \end{array}
  \right)\in\mathbb{R}^{m(T+1)\times mT}.
\end{align*}
\begin{asmp}
  The matrices $\mathbf{Q}$ and $\mathbf{R}$ are positive semidefinite.
  \label{QR_assumption}
\end{asmp}
The conventional stochastic linear-quadratic (LQ) problem turns out to be a special case of $(\mathcal{Q}_c)$ in which both $\mathbf{Q}$ and $\mathbf{R}$ are diagonal block matrices,  and are positive semidefinite and positive definite, respectively.
Under Assumption \ref{QR_assumption}, $(\mathcal{Q}_c)$ is solvable by our proposed scenario-decomposition scheme, as each scenario subproblem
\begin{IEEEeqnarray*}{ccl}
  ~(\mathcal{Q}_c^i)~~&\min_{\mathbf{u}} &~\frac{1}{2}\mathbf{x}'\mathbf{Q}\mathbf{x}
  +\mathbf{x}'\mathbf{c}+\frac{1}{2}\mathbf{u}'\mathbf{R}\mathbf{u}+\mathbf{u}'\mathbf{d}\\
  & {\rm s.t.} & ~\mathbf{x}=\mathbf{A}+\mathbf{B}\mathbf{u}+\mathbf{C}\boldsymbol{\xi}^i,
\end{IEEEeqnarray*}
is convex w.r.t. the decision variable $\mathbf{u}$. If $\mathbf{R}$ is further positive definite, we have the optimal solution to $(\mathcal{Q}_c^i)$, denoted by $\mathbf{u}^{i,0}$, in an analytical form given by
\begin{align}
  \mathbf{u}^{i,0}=-(\mathbf{B}'\mathbf{Q}\mathbf{B}+\mathbf{R})^{-1}
  [\mathbf{B}'\mathbf{Q}(\mathbf{A}+\mathbf{C}\boldsymbol{\xi}^i)+\mathbf{B}'\mathbf{c}+\mathbf{d}].
  \label{Q_scenario_sol}
\end{align}
Note again that the optimal solution to the $i$th scenario problem, $\mathbf{u}^{i,0}$, is not the optimal result to the primal problem $(\mathcal{Q}_c)$, even not a feasible one since it violates the nonanticipative constraint. We now apply the scenario-decomposition solution approach to $(\mathcal{Q}_c)$. More precisely, let us consider at iteration $\nu=0,1,\ldots$, the following augmented Lagrangian problem for each scenario $i$,
\begin{IEEEeqnarray*}{ccl}
  ~(\mathcal{Q}_c^{i,\nu})~~&\min_{\mathbf{u}}&~\frac{1}{2}\mathbf{x}'\mathbf{Q}\mathbf{x}
  +\mathbf{x}'\mathbf{c}+\frac{1}{2}\mathbf{u}'\mathbf{R}\mathbf{u}+\mathbf{u}'\mathbf{d}
  +\mathbf{u}'\mathbf{w}^{i,\nu}+\frac{1}{2}\alpha|\mathbf{u}-\hat{\mathbf{u}}^{i,\nu}|_2^2\\
  &{\rm s.t.}&~\mathbf{x}=\mathbf{A}+\mathbf{B}\mathbf{u}+\mathbf{C}\boldsymbol{\xi}^i,
\end{IEEEeqnarray*}
for a given implementable policy $\hat{\mathbf{u}}^{i,\nu}$ and a Lagrangian multiplier $\mathbf{w}^{i,\nu}$ (note that when $\nu=0$, $\hat{\mathbf{u}}^{i,0}$ is set at the implementable solution attained from $\mathbf{u}^{i,0}$, the optimal solution of $(\mathcal{Q}^i_c)$, and $\mathbf{w}^{i,0}$ is set as a zero vector). This time, due to the newly-added quadratic term on $\mathbf{u}$ in the objective, the optimal solution of $(\mathcal{Q}_c^{i,\nu})$, denoted by $\mathbf{u}^{i,\nu+1}$, is always given analytically by
\begin{align}
  \mathbf{u}^{i,\nu+1}={}&-(\mathbf{B}'\mathbf{Q}\mathbf{B}+\mathbf{R}+\alpha I_{nT})^{-1}
  \left[\mathbf{B}'\mathbf{Q}(\mathbf{A}+\mathbf{C}\boldsymbol{\xi}^i)+\mathbf{B}'\mathbf{c}+\mathbf{d}
  +\mathbf{w}^{i,\nu}-\alpha\hat{\mathbf{u}}^{i,\nu}\right],
  \label{Q_aug_sol}
\end{align}
where $I_{nT}$ is an $nT$-by-$nT$ identity matrix. Note that the explicit recursions in \eqref{Q_aug_sol} help us saving efforts when we deal with the iterative augmented Lagrangian problems. Therefore, the algorithm for this type of application is quite efficient. We then calculate $\hat{\mathbf{u}}^{i,\nu+1}$, the implementable solution for the next iteration, based on \eqref{conditional_expectation} or following the same procedure shown from \eqref{nu+1_sol} to \eqref{nu+1_project2}, and update $\mathbf{w}^{i,\nu+1}$ according to \eqref{w_update}. In practice, we could select the following condition as our stopping criterion,
\begin{align}
  \sum_{i\in\mathcal{I}}\rho_i\left(|\hat{\mathbf{u}}^{i,\nu+1}-\hat{\mathbf{u}}^{i,\nu}|_2^2 + \frac{1}{\alpha^2}|\mathbf{w}^{i,\nu+1}-\mathbf{w}^{i,\nu}|_2^2\right) \leq \epsilon,
  \label{Q_stopping criterion}
\end{align}
for a sufficiently small tolerance $\epsilon>0$. The set of implementable controls $\{\hat{\mathbf{u}}^{i,\nu+1}:i\in\mathcal{I}\}$ that satisfies this stopping rule is chosen as the optimal solution to $(\mathcal{Q})$ or $(\mathcal{Q}_c)$, which is denoted by $\{\hat{\mathbf{u}}^{i,\infty}:i\in\mathcal{I}\}$.

\begin{exam}\label{onlineQ_eg}
Let us consider an illustrative problem with a scalar state ($m=1$), a two-dimensional control ($n=2$), and a planning horizon of $T=3$. The system parameters are simply given by $A_t=1$ and $B_t=(1,1)$ for all $t$, whereas $\mathbf{Q}=(Q_{ij})_{i,j=0}^T$ and $\mathbf{R}=(R_{ij})_{i,j=0}^{T-1}$ in the performance measure are randomly generated as follows,
\begin{align*}
&\mathbf{Q}=\left(
\begin{array}{c:c:c:c}
       2.4512 & 1.0930 & 1.0243 & 1.8873 \\
       \hdashline
       1.0930 & 0.7852 & 0.2319 & 1.0027 \\
       \hdashline
       1.0243 & 0.2319 & 0.7276 & 0.5147 \\
       \hdashline
       1.8873 & 1.0027 & 0.5147 & 1.7188
\end{array}
\right),
\\
&\mathbf{R}=\left(
\begin{array}{cc:cc:cc}
       1.3281 & 1.4932 & 1.2903 & 0.7788 & 1.0149 & 1.0774 \\
       1.4932 & 2.6110 & 2.2984 & 1.3315 & 1.3902 & 2.3629 \\
       \hdashline
       1.2903 & 2.2984 & 2.7214 & 1.7258 & 1.7339 & 2.6799 \\
       0.7788 & 1.3315 & 1.7258 & 1.3102 & 1.0305 & 1.6583 \\
       \hdashline
       1.0149 & 1.3902 & 1.7339 & 1.0305 & 1.3073 & 1.6208 \\
       1.0774 & 2.3629 & 2.6799 & 1.6583 & 1.6208 & 2.9734
\end{array}
\right).
\end{align*}
The above two matrices are positive semidefinite and positive definite, respectively. To have a positive definite $\mathbf{R}$ in this example is for the purpose of comparison with the classical stochastic LQ control. Furthermore, $\mathbf{c}$ and $\mathbf{d}$  are set to be zero vectors for simplicity. The white system disturbance $\xi_t$ is modeled by a two-point distribution at each time $t$ with $D_{\xi_t}=\{1,-1\}$ and equal probability. Hence this is simply a binomial scenario tree as shown in Figure \ref{Q_eg_tree}, where the possible realizations of $\xi_t$'s at different time stages and under different scenarios are listed next to the related circle nodes. The total number of scenarios is $|\mathcal{I}|=8$ with the scenario probability $\rho_i=1/8$ for every $i\in\mathcal{I}$. The partitions of the scenario set, $\mathcal{I}_t$'s, together with scenario bundles at each time, $\mathcal{I}_{t,l}$'s, are easily recognized: $\mathcal{I}_0=\{\mathcal{I}_{0,1}\}=\{\{i_1,\ldots,i_{8}\}\}$; $\mathcal{I}_1=\{\mathcal{I}_{1,1},\mathcal{I}_{1,2}\}=\{\{i_1,\ldots,i_{4}\},\{i_{5},\ldots,i_{8}\}\}$; and finally $\mathcal{I}_2=\{\mathcal{I}_{2,1},\ldots,\mathcal{I}_{2,4}\}
=\{\{i_1,i_2\},\ldots,\{i_{7},i_{8}\}\}$.  Suppose the system starts from $x_0=1$. The optimal controls $\hat{\mathbf{u}}^{i,\infty}=((\hat{u}_0^{i,\infty})',(\hat{u}_1^{i,\infty})',(\hat{u}_2^{i,\infty})')'$, $i\in\mathcal{I}$, solved by the scenario-decomposition scheme in MATLAB for the above online QP problem,  are displayed (rounding in two decimals) beneath the corresponding nodes in Figure \ref{Q_eg_tree}. We next keep only diagonal blocks and set others to be zeros in the above $\mathbf{Q}$ and $\mathbf{R}$ and investigate the resulted standard stochastic LQ problem using both PHA and DP. We find that the optimal controls obtained from both methods coincide with each other. This exercise numerically demonstrates equivalent solution powers to certain degrees from both time decomposition and scenario decomposition approaches when both are applied to the separable and monotone stochastic control problems with convex scenario subproblems.

\begin{figure}[!t]
\begin{center}
\scalebox{0.9}{
\begin{tikzpicture}[
every node/.style={scale=0.8},
level 1/.style={sibling distance=8cm, level distance=1cm},
level 2/.style={sibling distance=4cm, level distance=1cm},
level 3/.style={sibling distance=3cm, level distance=2cm},
]
\node (Root) [rectangle,draw,minimum size=0.5cm] {} 
  child {node (xi0_1) [circle,draw,minimum size=0.8cm]  {1} 
    child {node (xi1_1) [circle,draw,minimum size=0.8cm]  {1} 
      child {node (xi2_1) [circle,draw,minimum size=0.8cm]  {1}} 
      child {node (xi2_2) [circle,draw,minimum size=0.8cm]  {-1}}
    }
    child {node (xi1_2) [circle,draw,minimum size=0.8cm]  {-1} 
      child {node (xi2_3) [circle,draw,minimum size=0.8cm]  {1}} 
      child {node (xi2_4) [circle,draw,minimum size=0.8cm]  {-1}}
    }
  }
  child {node (xi0_2) [circle,draw,minimum size=0.8cm]  {-1} 
    child {node (xi1_3) [circle,draw,minimum size=0.8cm]  {1} 
      child {node (xi2_5) [circle,draw,minimum size=0.8cm]  {1}} 
      child {node (xi2_6) [circle,draw,minimum size=0.8cm]  {-1}}
    }
    child {node (xi1_4) [circle,draw,minimum size=0.8cm]  {-1} 
      child {node (xi2_7) [circle,draw,minimum size=0.8cm]  {1}} 
      child {node (xi2_8) [circle,draw,minimum size=0.8cm]  {-1}}
    }
  };
   \begin{scope}[every node/.style={right,scale=0.8}]
     \path (Root  -| xi2_8) ++(2mm,0) node {} ++(5mm,0) node {$t=0$};
     \path (xi0_1 -| xi2_8) ++(2mm,0) node {} ++(5mm,0) node {$t=1$};
     \path (xi1_1 -| xi2_8) ++(2mm,0) node {} ++(5mm,0) node {$t=2$};
     \path (xi2_1 -| xi2_8) ++(2mm,0) node {} ++(5mm,0) node {$T=3$};
   \end{scope}

\path (xi0_1) -- (xi0_2) node [midway,align=center] {
$\footnotesize\hat u_0^{i,\infty}=\left(
\begin{aligned}
  0.37 \\ 
  -1.83 
\end{aligned}
\right)$,\\$\footnotesize\forall i\in\mathcal{I}_{0,1}$};

\path (xi1_1) -- (xi1_2) node [below = 0.02cm of xi0_1,align=center] {
$\footnotesize\hat u_1^{i,\infty}=\left(
\begin{aligned}
  3.58 \\ 
  -5.01 
\end{aligned}
\right)$,\\$\footnotesize\forall i\in\mathcal{I}_{1,1}$};

\path (xi1_3) -- (xi1_4) node [below = 0.02cm of xi0_2,align=center] {
$\footnotesize\hat u_1^{i,\infty}=\left(
\begin{aligned}
  1.60 \\ 
  -1.50 
\end{aligned}
\right)$,\\$\footnotesize\forall i\in\mathcal{I}_{1,2}$};

\node (u_1_1) [below = 0.8cm of xi1_1,align=center] {
$\footnotesize\hat u_2^{i,\infty}=\left(
\begin{aligned}
  -3.23 \\ 
  1.87 
\end{aligned}
\right)$,\\$\footnotesize\forall i\in\mathcal{I}_{2,1}$};

\node (u_1_2) [below = 0.8cm of xi1_2,align=center] {
$\footnotesize\hat u_2^{i,\infty}=\left(
\begin{aligned}
  -1.25 \\ 
  1.41 
\end{aligned}
\right)$,\\$\footnotesize\forall i\in\mathcal{I}_{2,2}$};

\node (u_1_3) [below = 0.8cm of xi1_3,align=center] {
$\footnotesize\hat u_2^{i,\infty}=\left(
\begin{aligned}
  -1.60 \\ 
  1.25 
\end{aligned}
\right)$,\\$\footnotesize\forall i\in\mathcal{I}_{2,3}$};

\node (u_1_4) [below = 0.8cm of xi1_4,align=center] {
$\footnotesize\hat u_2^{i,\infty}=\left(
\begin{aligned}
  0.38 \\ 
  0.79 
\end{aligned}
\right)$,\\$\footnotesize\forall i\in\mathcal{I}_{2,4}$};

\node [below = 0.1cm of xi2_1,align=center] {$i_1$};
\node [below = 0.1cm of xi2_2,align=center] {$i_2$};
\node [below = 0.1cm of xi2_3,align=center] {$i_3$};
\node [below = 0.1cm of xi2_4,align=center] {$i_4$};
\node [below = 0.1cm of xi2_5,align=center] {$i_5$};
\node [below = 0.1cm of xi2_6,align=center] {$i_6$};
\node [below = 0.1cm of xi2_7,align=center] {$i_7$};
\node [below = 0.1cm of xi2_8,align=center] {$i_8$};

\path (xi1_1)edge[->,dashed](u_1_1);
\path (xi1_2)edge[->,dashed](u_1_2);
\path (xi1_3)edge[->,dashed](u_1_3);
\path (xi1_4)edge[->,dashed](u_1_4);
\end{tikzpicture}%
}
\end{center}
\caption{Scenario tree for $\boldsymbol{\xi}$ of Example \ref{onlineQ_eg} and its optimal controls at different times and under different scenarios.}
\label{Q_eg_tree}
\end{figure}
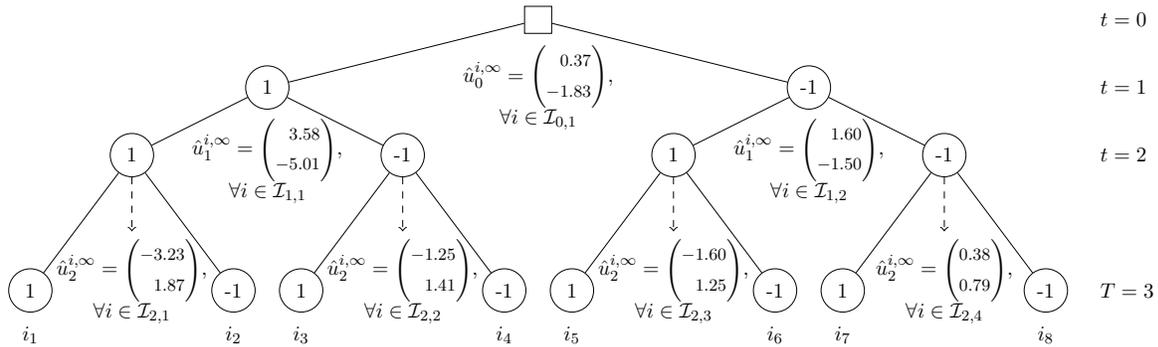

\end{exam}

\section{Dynamic Portfolio Selection with Smoothing Properties}

In this section, let us consider a financial market consisting of $n$ risky assets and one risk-free asset, and an investment time horizon $T$ (with the time indices $t=0,1,\cdots,T-1$). The total return of the riskless asset, denoted by $r_t$, is deterministic and given, whereas the random total return of risky assets at time $t$, denoted by ${e}_t=(e_t^1,\ldots,e_t^n)'\in\mathbb{R}^n$, is assumed to follow a discrete distribution $D_{e_t}$ with $|D_{e_t}|$ possible realizations and corresponding probabilities $\pi_t^k\geq 0$, $k=1,\ldots,|D_{e_t}|$. Furthermore, ${e}_t$'s from different time stages are assumed to be independent.
A series of realizations on $\{{e}_t\}_t$ then defines a \emph{scenario}. Therefore, given the time horizon $T$, there are in total $\prod_{t=0}^{T-1}|D_{e_t}|$ number of scenarios and the scenario probability $\rho_i$ is then calculated by the product of related $\pi_t^k$'s that are attached to this scenario $i$. Let $x_t\in\mathbb{R}$ be the wealth level at time $t$ with the initial wealth $x_0$ given, and ${u}_t=(u_t^1,,\cdots,u_t^n)'\in\mathbb{R}^n$ be the portfolio allocation where $u_t^i$ is the dollar amount to invest in the risky asset $i$, $i=1,\cdots,n$. Then the dollar amount to the riskless asset at time $t$ is $(x_t-\sum_{i=1}^n u_t^i)$ under the assumption of self-financing. Therefore, the wealth dynamic under policy ${u}_t$ becomes
\begin{align}
  x_{t+1} = \sum_{i=1}^n e_t^i {u}_t^i+(x_t-\sum_{i=1}^n {u}_t^i)r_t
          = r_{t}x_t + {P}_t'{u}_t,~t=0,1,\ldots,T-1,\label{wealth_dynamic}
\end{align}
where ${P}_t={e}_t-r_t{1}_n\in\mathbb{R}^n$ is known as the excess total return and ${1}_n\in\mathbb{R}^n$ is an all-one vector of size $n$.

There are in general two directions on objectives for modelling the portfolio selection problem, i.e., the expected utility maximization framework and the mean-variance formulation. Among  conventional formulations, most objective functions  focus on the performance of the \emph{terminal} wealth. Failing to take into account the investment behavior during the investment process could lead to large fluctuations either in the wealth level or in the policy values, while the former may further lead to a bankruptcy (see \cite{Discrete_Bankruptcy} and \cite{Continuous_Bankruptcy}) and the latter may cause large transaction costs. Thus, a relatively smooth wealth growth may often be desirable, even with some sacrifice of the terminal wealth. In some other situations, to avoid the transaction cost as much as possible, investors may demand relatively uniform budget allocation during the whole investment period. In order to reflect these practical concerns, we extend in this research both the traditional utility formulation and the conventional dynamic mean-variance model by attaching to their original objective functions an expectation of a \emph{smoothing} term in a \emph{quadratic variation} form along the time horizon,
\begin{align}
S(\{x_t\}_t,\{u_t\}_t)=\sum_{t\in\mathcal{T}}\Big(f_t(x_t,u_t)
-\frac{1}{|\mathcal{T}|}\sum_{\tau\in\mathcal{T}}f_\tau(x_\tau,u_\tau)\Big)^2,
\label{smoothing_term}
\end{align}
for some types of functions $f_t:\mathbb{R}\times\mathbb{R}^n\rightarrow\mathbb{R}$, where $\mathcal{T}\subseteq \{0,1,\ldots,T\}$ is a subset of time stages selected for smoothing purpose. Some concrete choices of $f_t$ could be
\begin{align}
  f_t(x_t,u_t)=x_t
  \label{wealth_smooth}
\end{align}
in order for us to smooth the wealth levels, and
\begin{align}
  f_t(x_t,u_t)=\sum_{i\in\mathcal{N}}{u}_t^i,
  \label{control_smooth}
\end{align}
in order for us to smooth the total investment amount in some specific risky assets specified by $\mathcal{N}\subseteq\{1,2,\ldots,n\}$.

\subsection{Smoothing under Expected Utility Maximization}

Conventionally, the investor seeks to find the optimal ${u}_t$ for all $t$ such that the expected utility of the final wealth, denoted by $\mathbb{E}[U(x_T)]$, is maximized subject to the wealth dynamic (\ref{wealth_dynamic}), where $U(x)$ is the investor's utility function. In this paper we further assume that the utility function $U(x)$ satisfies $-U'(x)/U''(x)=a+bx$ for certain coefficients $a$ and $b$, which is known as the hyperbolic-absolute-risk-aversion (HARA) utility. Some commonly-used utilities, for example, the exponential utility of the form $\{U(x)=-e^{-x/a}:x\in\mathbb{R}\}$ where $a>0$ and $b=0$ and the power utility of the form $\{U(x)=\frac{1}{b-1}(a+bx)^{1-1/b}:x \geq -a/b\}$ where $b\neq 1$ and $b\neq 0$ are two special cases of HARA utility. In this subsection, we consider expected utility maximization with a general smoothing term,
\begin{IEEEeqnarray*}{ccl}
  ~(\mathcal{US}(\gamma))~~ & \max_{{u}_t,\forall t} & ~\mathbb{E}[U(x_T)]-\gamma \mathbb{E}[S(\{x_t\}_{t\in\mathcal{T}},\{u_t\}_{t\in\mathcal{T}})]\\
  & \rm{s.t.} & ~x_{t+1} = r_{t}x_t + {P}_t'{u}_t,~t=0,1,\ldots,T-1,
\end{IEEEeqnarray*}
where the trade-off parameter $\gamma\geq 0$ specified by the investor represents a trade-off between the expected utility of the \emph{terminal} wealth and the smoothing demand during the \emph{intermediate} process. The larger the $\gamma$, the more the investor is concerned about smoothing. When $\gamma=0$, this problem reduces to the classical model under the HARA utility. It is well known that $(\mathcal{US}(0))$ is solvable by DP and the optimal policy (see \cite{bertsekas2017dynamic}) is given by
\begin{align}
{u}^*_{t}(x_{t})={\beta}_{t}\left(\frac{a}{\prod_{\tau=t+1}^{T-1}r_{\tau}}+br_{t}x_{t}\right),
\label{utility_analytical}
\end{align}
for $t=0,1,\ldots,T-1$ (let us define the operator $\prod_{\tau=T}^{T-1}r_{\tau}=1$ for consistency), where ${\beta}_{t}=(\beta_{t}^1,\ldots,\beta_{t}^n)'\in\mathbb{R}^n$ should be derived from the optimality condition once given $x_t$,
\begin{align}
\mathbb{E}\left[U'\left(r_{t}x_{t}
+\left(\displaystyle\frac{a}{\prod_{\tau=t+1}^{T-1}r_{\tau}}+br_{t}x_{t}\right)
{\beta}'_{t}{P}_{t}\right){P}_{t}\right]={0},
\label{beta_condition}
\end{align}
which is a system of $n$ nonlinear equations at time $t$. In general, the condition (\ref{beta_condition}) is hard to solve for ${\beta}_{t}$. We point out here that $(\mathcal{US}(0))$ is also solvable by PHA under a discrete market setting, leading to a family of numerical optimal solutions in a tabular form, conditional on the future realizations of $e_t$'s. More importantly, PHA will display its extra solution power more on $(\mathcal{US}(\gamma))$ with $\gamma>0$, the expected utility maximization with smoothing term, whose non-separability prevents DP from its adoption. It is obvious that, for any $\gamma\geq 0$, the quadratic smoothing terms in the forms of (\ref{wealth_smooth}) or (\ref{control_smooth}), together with the linear wealth dynamic in (\ref{wealth_dynamic}), make the problem $(\mathcal{US}(\gamma))$  satisfying the conditions in Theorem \ref{PHA_thm}, thus being solvable by PHA.
We complete this subsection by investigating a case study below.

\begin{exam} 
Consider a similar market setting as in Example 3 of \cite{cui2014optimal}, where there are three risky assets ($n=3$) and the distribution, for simplicity, is directly imposed on the random excess total return ${P}_t$, instead of ${e}_t$. Suppose that ${P}_t$ is independent and identically distributed with a discrete uniform distribution of five possible realizations ($|D_{P_t}|=5$ for all $t$ and $\pi_t^k=1/5$ for all $k=1,\ldots,5$ and all $t$),
\begin{align}
{P}_t\in&
  \left(
  \left[
  \begin{array}{c}
  0.18\\
  -0.05\\
  -0.14
  \end{array}
  \right],
  \left[
  \begin{array}{c}
  0.03\\
  -0.12\\
  -0.03
  \end{array}
  \right],
  \left[
  \begin{array}{c}
  -0.05\\
  0.15\\
  0.05
  \end{array}
  \right],
  \left[
  \begin{array}{c}
  -0.01\\
  0.15\\
  0.10
  \end{array}
  \right],
  \left[
  \begin{array}{c}
  -0.05\\
  0.01\\
  0.06
  \end{array}
  \right]
  \right),~\forall t.
  \label{P_distribution}
\end{align}
We scale the initial wealth to $x_0=1$, and set $T=3$ and $r_t=1.04$ for all $t$. Suppose that the investor has an exponential utility $U(x)=-e^{-x}$ (hence $a=1$ and $b=0$). Originally, the model $(\mathcal{US}(0))$ with this exponential utility can still be solved by  DP under the above discrete market setting \eqref{P_distribution}, and based on (\ref{utility_analytical}), the analytical optimal feedback policy is given by, for $t=0,1,\ldots,T-1$,
\begin{eqnarray}
{u}^*_{t}(x_t)=\frac{{\beta}_{t}}{\prod_{\tau=t+1}^{T-1}r_{\tau}},
\label{utility_analytical_concrete}
\end{eqnarray}
According to (\ref{beta_condition}), the $x_t$-dependent ${\beta}_{t}=(\beta_{t}^1,\beta_{t}^2,\beta_{t}^3)'$ at each time $t$ should be derived from the following system of nonlinear equations, starting from $t=0$,
\begin{eqnarray}
  \sum_{k=1}^{|D_{P_t}|}\pi_k\exp\left\{-\left(
  r_tx_t+\frac{\beta'_{t}{P}_{t,k}}{\prod_{\tau=t+1}^{T-1}r_{\tau}}
  \right)\right\}{P}_{t,k}={0},
  \label{beta_condition_concrete}
\end{eqnarray}
where ${P}_{t,k}$ stands for the $k$th possible realization in $D_{P_t}$ of (\ref{P_distribution}). Although obtaining the value of ${\beta}_{t}$ is indispensable for executing the DP-based optimal policy, solving for ${\beta}_{t}$ from (\ref{beta_condition_concrete}) is not easy, even under the current discrete market setting.

We now resolve the above $(\mathcal{US}(0))$ by the scenario-decomposition method PHA and get the optimal asset allocations $\hat{u}_t^{i,\infty}$ in Table \ref{utility_tabular} (rounding in two decimals), which is a \emph{tabular} form in the sense that it indicates how much to invest at what time, on which asset (the symbols A1, A2 and A3 represent the three risky assets, respectively), and under which scenario (a path of realized $P_{t,k}$'s). Then the wealth trajectory under the optimal policy can be traced for any scenario $i$, and we denote it by $\{x_t^{i,\infty}\}_t$ with $x_0^{i,\infty}=x_0$ given for every $i$. Since the number of all possible realizations of wealth trajectories under the optimal solutions is finite in our discrete market (which in this example is $|\mathcal{I}|=125$), we could easily check the consistency of the optimal solutions between numerical values from PHA and those outputted by the analytical policy from DP when plugging in those possible future wealth levels. This is done by, for each scenario $i$, replacing the left hand side of \eqref{utility_analytical_concrete} with the value of $\hat{u}_t^{i,\infty}$ and deriving \emph{reversely} the scenario-specific ${\beta}_{t,i}$, and we succeed to confirm that the resulted ${\beta}_{t,i}$ satisfies the equality in (\ref{beta_condition_concrete}) where we set $x_t=x_t^{i,\infty}$. We again numerically demonstrate the equivalence between the scenario-decomposition and the time-decomposition approaches, when available, for solving separable, monotone and convex multistage decision-making problems under a finite-scenario setting. More importantly, we attain the exact investment decisions that DP often fails to provide due to the difficulty in finding $\beta_t$ from \eqref{beta_condition}.

When we consider $(\mathcal{US}(\gamma))$ with $\gamma>0$, only PHA works for numerical solutions. In this example, we test $\gamma=1$ and $\gamma=10$ for the wealth smoothing term in (\ref{wealth_smooth}) with $\mathcal{T}=\{1,2,3\}$. The computational tabular results are also listed in Table \ref{utility_tabular}. Comparing them with the results without smoothing, we find that, when $\gamma>0$, the asset allocations in general become moderate. This further leads to smoother wealth trajectories, no matter in a single-scenario level $\{x_t^{i,\infty}\}_t$ (which could be seen from our experiments but we omit the details here), or in an overall level in terms of their expectations and variances as exhibited in Table \ref{utility_appropriate} (rounding in two decimals if needed). From Table \ref{utility_appropriate}, we could see a stabler growth on the expected wealth and a less-fluctuated wealth movement (i.e., lower variances) when the wealth smoothing is considered ($\gamma=1,10$). These naturally cause a decrease on the expected terminal wealth $\mathbb{E}[x_T^{i,\infty}]$ compared with the non-smoothing setting ($\gamma=0$). And the larger the $\gamma$, the more conservative the investment decision, thus the bigger the sacrifice on $\mathbb{E}[x_T^{i,\infty}]$. On the other hand, however, the smoothing helps, to a certain degree, on reducing the possible bankruptcy induced by the relatively aggressive investing style during the investment process. To see this, let us define the bankruptcy rate at time $t$ by (similar to \cite{Discrete_Bankruptcy})
\begin{align}
BR_t
&= \mathbb{P}(x_t<x_t^b,~x_\tau\geq x_\tau^b~\text{for }\tau=0,1,\ldots,t-1)\nonumber\\
&=\frac{BN_t}{|\mathcal{I}|-\sum_{\tau=0}^{t-1}BN_{\tau}},~t=1,\ldots,T,
\label{bankruptcy_definition}
\end{align}
where $x_t^b$ denotes the wealth benchmark at time $t$ specified by the investor, and we define $BN_t$ as the number of bankruptcy scenarios at time $t$ under which $x_t<x_t^b$ and $x_\tau\geq x_\tau^b$ for $\tau<t$. In fact, the denominator of (\ref{bankruptcy_definition}) indicates the number of scenarios that still survive at time $t$. Initially at $t=0$, we set $BR_0=0$, $BN_0=0$, and $x_0^b=x_0$, and we choose a risk-free-growing wealth benchmark, that is, $x_t^b=\prod_{\tau=0}^{t-1}r_tx_0$ for $t\geq 1$. From Table \ref{utility_appropriate}, we could see a distinct reduction on the bankruptcy rates after introducing the smoothing property with some appropriate smoothing balances (such as $\gamma=1$ and $\gamma=10$ here). Moreover, adding a smoothing term also leads to a better worst case of the final wealth (in this example we obtain $0.3782$, $0.9853$, and $1.0131$ for $\gamma=0,1,10$, respectively): the conservative behavior under smoothing helps to avoid severe losses in case an adverse scenario occurs.

\begin{table}[t]
\centering
\caption{Tabular Optimal Solutions $\hat{u}_t^{i,\infty}$ of ($\mathcal{US}(\gamma))$ in Example \ref{utility_eg}} 
\resizebox{\columnwidth}{!}{%
\begin{tabular}{ccccccccccc}
\toprule
\multirow{2}[2]{*}{$t$} & \multicolumn{3}{c}{$\gamma=0$} & \multicolumn{3}{c}{$\gamma=1$} & \multicolumn{3}{c}{$\gamma=10$} & \multirow{2}[3]{*}{Scenarios} \\
\cmidrule{2-10} & A1 & A2 & A3 & A1 & A2 & A3 & A1 & A2 & A3 &  \\
\midrule
0  & 10.92 & 3.31 & 7.25 & 13.76 & 0.92 & 14.04 & 12.16 & 0.14 & 13.43 &  \\
\midrule
\multirow{5}[2]{*}{1} & 8.65 & 3.33 & 4.97 & 2.79 & -0.85 & 3.52 & -0.55 & -0.09 & -0.44 & if ${P}_{0,1}$ occurs \\
  & 13.65 & 2.85 & 10.46 & 1.92 & 0.10 & 1.74 & -0.36 & -0.02 & -0.33 & if ${P}_{0,2}$ occurs \\
  & 9.86 & 3.25 & 6.23 & 0.88 & 0.20 & 0.64 & -0.65 & 0.04 & -0.67 & if ${P}_{0,3}$ occurs \\
  & 7.65 & 3.31 & 4.09 & -0.74 & -0.16 & -0.55 & -1.75 & 0.06 & -1.75 & if ${P}_{0,4}$ occurs \\
  & 12.47 & 2.98 & 9.12 & 0.88 & 0.18 & 0.67 & -0.75 & 0.04 & -0.76 & if ${P}_{0,5}$ occurs \\
\midrule
\multirow{25}[10]{*}{2} & 7.12 & 3.31 & 3.56 & 1.78 & -0.60 & 2.31 & -0.49 & -0.05 & -0.42 & if ${P}_{0,1},{P}_{1,1}$ occurs \\
  & 10.64 & 3.06 & 7.14 & 3.57 & -1.37 & 4.81 & -0.74 & -0.07 & -0.64 & if ${P}_{0,1},{P}_{1,2}$ occurs \\
  & 7.81 & 3.31 & 4.16 & 3.25 & -0.66 & 3.78 & -0.51 & -0.12 & -0.37 & if ${P}_{0,1},{P}_{1,3}$ occurs \\
  & 6.48 & 3.26 & 3.06 & 0.72 & -0.69 & 1.39 & -0.50 & 0.00 & -0.48 & if ${P}_{0,1},{P}_{1,4}$ occurs \\
  & 9.68 & 3.18 & 6.07 & 3.31 & -1.22 & 4.40 & -0.68 & -0.07 & -0.58 & if ${P}_{0,1},{P}_{1,5}$ occurs \\
\cmidrule{2-11}  & 10.41 & 3.09 & 6.89 & 1.40 & 0.06 & 1.28 & -0.30 & -0.01 & -0.28 & if ${P}_{0,2},{P}_{1,1}$ occurs \\
  & 17.32 & 2.04 & 15.15 & 2.30 & 0.08 & 2.14 & -0.44 & -0.01 & -0.41 & if ${P}_{0,2},{P}_{1,2}$ occurs \\
  & 12.56 & 2.76 & 9.41 & 2.05 & 0.12 & 1.85 & -0.39 & -0.03 & -0.35 & if ${P}_{0,2},{P}_{1,3}$ occurs \\
  & 9.02 & 3.23 & 5.39 & 1.09 & -0.06 & 1.11 & -0.23 & 0.00 & -0.22 & if ${P}_{0,2},{P}_{1,4}$ occurs \\
  & 15.66 & 2.28 & 13.14 & 2.11 & 0.10 & 1.93 & -0.42 & -0.01 & -0.39 & if ${P}_{0,2},{P}_{1,5}$ occurs \\
\cmidrule{2-11} & 7.96 & 3.31 & 4.31 & 0.64 & 0.16 & 0.46 & -0.53 & 0.04 & -0.54 & if ${P}_{0,3},{P}_{1,1}$ occurs \\
  & 12.25 & 2.81 & 9.03 & 1.04 & 0.25 & 0.75 & -0.72 & 0.05 & -0.75 & if ${P}_{0,3},{P}_{1,2}$ occurs \\
  & 8.92 & 3.25 & 5.28 & 0.91 & 0.17 & 0.71 & -0.80 & 0.05 & -0.82 & if ${P}_{0,3},{P}_{1,3}$ occurs \\
  & 7.13 & 3.29 & 3.61 & 0.60 & 0.09 & 0.48 & -0.27 & 0.02 & -0.29 & if ${P}_{0,3},{P}_{1,4}$ occurs \\
  & 11.17 & 2.97 & 7.78 & 0.93 & 0.25 & 0.65 & -0.70 & 0.05 & -0.72 & if ${P}_{0,3},{P}_{1,5}$ occurs \\
\cmidrule{2-11} & 6.46 & 3.26 & 3.05 & -0.63 & -0.11 & -0.50 & -1.41 & 0.05 & -1.41 & if ${P}_{0,4},{P}_{1,1}$ occurs \\
  & 9.34 & 3.21 & 5.72 & -0.93 & -0.18 & -0.72 & -1.97 & 0.08 & -1.98 & if ${P}_{0,4},{P}_{1,2}$ occurs \\
  & 6.84 & 3.27 & 3.37 & -0.64 & -0.20 & -0.42 & -2.07 & 0.07 & -2.06 & if ${P}_{0,4},{P}_{1,3}$ occurs \\
  & 5.80 & 3.15 & 2.69 & -0.47 & -0.10 & -0.35 & -0.83 & 0.04 & -0.84 & if ${P}_{0,4},{P}_{1,4}$ occurs \\
  & 8.57 & 3.25 & 4.96 & -0.88 & -0.16 & -0.69 & -1.89 & 0.07 & -1.89 & if ${P}_{0,4},{P}_{1,5}$ occurs \\
\cmidrule{2-11} & 9.70 & 3.18 & 6.10 & 0.66 & 0.13 & 0.50 & -0.61 & 0.03 & -0.61 & if ${P}_{0,5},{P}_{1,1}$ occurs \\
  & 15.66 & 2.28 & 13.14 & 1.00 & 0.23 & 0.73 & -0.85 & 0.04 & -0.86 & if ${P}_{0,5},{P}_{1,2}$ occurs \\
  & 11.34 & 2.94 & 7.97 & 0.91 & 0.16 & 0.72 & -0.90 & 0.04 & -0.90 & if ${P}_{0,5},{P}_{1,3}$ occurs \\
  & 8.45 & 3.25 & 4.85 & 0.61 & 0.07 & 0.52 & -0.34 & 0.02 & -0.35 & if ${P}_{0,5},{P}_{1,4}$ occurs \\
  & 14.41 & 2.46 & 11.64 & 0.95 & 0.21 & 0.71 & -0.81 & 0.04 & -0.82 & if ${P}_{0,5},{P}_{1,5}$ occurs \\
\bottomrule
\end{tabular}%
}
\label{utility_tabular}%
\end{table}%

   \begin{table}[t]
     \centering
     \caption{Wealth Statistics and Bankruptcy Evaluations of Example \ref{utility_eg}}
     \resizebox{\columnwidth}{!}{%
       \begin{tabular}{ccccccccccc}
       \toprule
       \multirow{2}[2]{*}{$t$} & \multirow{2}[2]{*}{$x_t^b$} & \multicolumn{3}{c}{$\mathbb{E}[x_t^{i,\infty}]$} & \multicolumn{3}{c}{\text{Var}$(x_t^{i,\infty})$} &  \multicolumn{3}{c}{$BR_t$} \\
\cmidrule{3-11}  &    & $\gamma=0$ & $\gamma=1$ & $\gamma=10$ & $\gamma=0$ & $\gamma=1$ & $\gamma=10$ & $\gamma=0$ & $\gamma=1$ & $\gamma=10$\\
       \midrule
       1  & 1.04 & 1.41 & 1.45 & 1.39 & 0.27 & 0.28 & 0.21 & 0.4 & 0.2 & 0.2\\
       2  & 1.08 & 1.82 & 1.54 & 1.43 & 0.49 & 0.28 & 0.22 & 0 & 0 & 0\\
       3  & 1.13 & 2.23 & 1.63 & 1.46 & 0.66 & 0.28 & 0.22 & 0.03 & 0 & 0\\
       \bottomrule
       \end{tabular}%
       }
     \label{utility_appropriate}%
   \end{table}%
\label{utility_eg}
\end{exam}

\subsection{Smoothing under Mean-variance Formulation}\label{MV_subsection}

Let us now consider a conventional discrete-time mean-variance (MV) formulation given as follows,
\begin{IEEEeqnarray*}{ccl}
~(\mathcal{MV}(w))~~ & \max\limits_{u_t,\forall t} & ~\mathbb{E}(x_T) - w Var(x_T) \\
& {\rm s.t.} &~x_{t+1}=r_tx_t+{P}_t'{u}_t,~t= 0, 1, \ldots, T-1,
\end{IEEEeqnarray*}
where the parameter $w$, predetermined by the investor, explicitly reveals her trade-off between the expected terminal wealth and its variance. Note that $(\mathcal{MV}(w))$, and also other types of MV models, is nonseparable owing to the variance operator. From \cite{li2000optimal}, we know that $(\mathcal{MV}(w))$ can be embedded into a family of separable auxiliary problems that are solvable by DP and the solution of an auxiliary problem with a special value of the parameter in turn solves the primal problem. We list the analytical optimal feedback policy of $(\mathcal{MV}(w))$ below,
{\small
\begin{align}
u_{t}^*(x_{t};w)
={}&-K_tr_tx_{t}+\left(x_0\prod_{s=0}^{T-1}r_s+\frac{1}{2w\prod_{s=0}^{T-1}
(1-\mathbb{E}'[P_{s}]K_{s})}\right)
\left(\prod_{\tau=t+1}^{T-1}\frac{1}{r_\tau}\right)K_t,~t=0,1,\ldots,T-1,
\label{MV_policy}
\end{align}
}%
where $K_t=\mathbb{E}^{-1}\left[P_{t}P_{t}'\right]\mathbb{E}[P_{t}]$, and we define the operator $\prod_{\tau=T}^{T-1}(1/r_\tau)=1$ for consistency.

Owing to the similar issues as in the utility framework, the objective in $(\mathcal{MV}(w))$ merely considers the \emph{final} wealth, thus may suffer possible large fluctuations during the investment process. Therefore, in this subsection we also investigate a more general mean-variance formulation by adding a wealth smoothing term,
\begin{IEEEeqnarray*}{ccl}
~(\mathcal{MVS}(w,\gamma))~~ & \max\limits_{u_t,\forall t} &~ \mathbb{E}(x_T) - w Var(x_T)
-\gamma \mathbb{E}\left[\sum\nolimits_{t=1}^{T}(x_t-\bar x)^2\right] \\
& {\rm s.t.} &~ x_{t+1}=r_tx_t+{P}_t'{u}_t,~t= 0, 1, \ldots, T-1,
\end{IEEEeqnarray*}
where $\bar x =1/T\sum_{t=1}^{T}x_t$ denotes the average wealth \emph{along the time horizon}, and $\gamma\geq 0$ reflects a preselected trade-off between the MV objective and the smoothing term. Note that PHA cannot be directly applied to $(\mathcal{MVS}(w,\gamma))$ since it is originally designed for solving the stochastic problem with only the \emph{risk-neutral} evaluation criterion (i.e., the expectation measure). We first rearrange the objective in $(\mathcal{MVS}(w,\gamma))$ as
{\small
\begin{align}
{}&\mathbb{E}(x_T) - w Var(x_T)-\gamma \mathbb{E}\left[\sum_{t=1}^{T}(x_t-\bar x)^2\right] \nonumber\\
={}&\mathbb{E}[x_T] - w (\mathbb{E}[x_T^2]-\mathbb{E}^2[x_T]) -\gamma \mathbb{E}\left[\sum_{t=1}^{T}x_t^2-\frac{1}{T}(\sum_{t=1}^{T}x_t)^2\right]\nonumber\\
={}&-\mathbb{E}\left[(\gamma-\frac{\gamma}{T}) \sum_{t=1}^{T-1}x_t^2
+(w+\gamma-\frac{\gamma}{T})x_T^2-\frac{\gamma}{T}\sum_{1\leq i\neq j\leq T}x_ix_j\right]
+w\mathbb{E}^2[x_T]+\mathbb{E}[x_T]\nonumber\\
={}&\tilde{U}(\mathbb{E}[x_1^2],\mathbb{E}[x_1x_2],\ldots,\mathbb{E}[x_1x_T],\mathbb{E}[x_2x_1],
\mathbb{E}[x_2^2],\ldots,\mathbb{E}[x_2x_T],
\ldots,\mathbb{E}[x_Tx_{1}],
\mathbb{E}[x_Tx_{2}],\ldots,\mathbb{E}[x_T^2],\mathbb{E}[x_T]).
\label{MVS_objective}
\end{align}
}%
Note that $\tilde{U}$ is a convex function of
$\mathbb{E}[x_ix_j],~1\leq i,j\leq T$, and $\mathbb{E}[x_T]$. By invoking the embedding scheme as in \cite{li2000optimal}, we consider the following auxiliary problem
\begin{IEEEeqnarray*}{ccl}
~(\mathcal{A}(w^c,\lambda))~~& \max\limits_{u_t,\forall t} & ~\mathbb{E}\left[-\sum_{1\leq i,j\leq T}w_{ij}x_ix_j+\lambda x_T\right] \\
& {\rm s.t.} & ~x_{t+1}=r_tx_t+{P}_t'{u}_t,~t= 0, 1, \ldots, T-1,
\end{IEEEeqnarray*}
where $\lambda\in\mathbb{R}$ and we assemble all the $w_{ij}$'s into
\begin{align*}
  w^c
  ={}&(w_{11},w_{12},\ldots,w_{1T},w_{21},w_{22},\ldots,
  w_{2T},\ldots,
  w_{T1},w_{T2},\ldots,w_{TT})'\in\mathbb{R}^{T^2},
\end{align*}
with $w_{tt}=\gamma-\gamma/T$ for $t=1,\ldots,T-1$, $w_{TT}=w+\gamma-\gamma/T$ and $w_{ij}=-\gamma/T$ for $1\leq i,j\leq T$ with $i\neq j$. Let us further denote the solution set of $(\mathcal{MVS}(w,\gamma))$ by $\Pi(w,\gamma)$, and the solution set of $(\mathcal{A}(w^c,\lambda))$ by $\Pi_\mathcal{A}(w^c,\lambda)$. We also denote, for any policy $\mathbf{u}=(u_0',u_1',\ldots,u_{T-1}')'\in\mathbb{R}^{nT}$, the first-order derivative of $\tilde{U}$ w.r.t. $\mathbb{E}[x_T]$ by
\begin{eqnarray}
  d(\mathbf{u})=\frac{d\tilde{U}}{d\mathbb{E}[x_T]}\bigg|_\mathbf{u}=1+2w\mathbb{E}[x_T]|_\mathbf{u}.
  \label{U_to_EX}
\end{eqnarray}
\begin{lem}
  For any $\mathbf{u}^*\in \Pi(w,\gamma)$, $\mathbf{u}^*\in \Pi_\mathcal{A}(w^c,d(\mathbf{u}^*))$.
  \label{MV_thm1}
\end{lem}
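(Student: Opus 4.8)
The plan is to exploit the decomposition already displayed in \eqref{MVS_objective}, which writes the objective value of $(\mathcal{MVS}(w,\gamma))$ under a policy $\mathbf{u}$ as
\[
\tilde{U}(\mathbf{u}) = -\mathbb{E}\Big[\sum_{1\le i,j\le T} w_{ij} x_i x_j\Big] + w\,\mathbb{E}^2[x_T] + \mathbb{E}[x_T],
\]
and to compare it termwise with the auxiliary objective
\[
V_\lambda(\mathbf{u}) := \mathbb{E}\Big[-\sum_{1\le i,j\le T} w_{ij} x_i x_j + \lambda x_T\Big].
\]
Because the quadratic part $\mathbb{E}[\sum w_{ij}x_ix_j]$ is \emph{identical} in both objectives (this is exactly why $w^c$ was defined with those coefficients), their difference depends on $\mathbf{u}$ only through the scalar $s:=\mathbb{E}[x_T]|_{\mathbf{u}}$. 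Writing $g_\lambda(s):=w s^2 + (1-\lambda)s$, I would record the identity $\tilde{U}(\mathbf{u}) = V_\lambda(\mathbf{u}) + g_\lambda(s)$, valid for every feasible $\mathbf{u}$ and every $\lambda$. This is the device that ``delinearizes'' the nonseparable term $\mathbb{E}^2[x_T]$: for fixed $\lambda$, $(\mathcal{A}(w^c,\lambda))$ is a plain expectation-criterion problem, and all the coupling is confined to the scalar correction $g_\lambda$.

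Next I would fix $\mathbf{u}^*\in\Pi(w,\gamma)$, set $s^*:=\mathbb{E}[x_T]|_{\mathbf{u}^*}$ and $\lambda^*:=d(\mathbf{u}^*)=1+2w s^*$ from \eqref{U_to_EX}, and isolate the one structural fact that drives the proof: the scalar function $g_{\lambda^*}(s)=w s^2 + (1-\lambda^*)s$ is convex (since $w\ge 0$, the natural risk-aversion condition), and the particular choice $\lambda^*=1+2ws^*$ forces $g_{\lambda^*}'(s^*)=(1-\lambda^*)+2ws^*=0$. Hence $s^*$ is the global \emph{minimizer} of $g_{\lambda^*}$, so $g_{\lambda^*}(s)\ge g_{\lambda^*}(s^*)$ for every attainable terminal mean $s$.

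The proof then closes by contradiction. Suppose $\mathbf{u}^*\notin\Pi_{\mathcal{A}}(w^c,\lambda^*)$; since $(\mathcal{A}(w^c,\lambda^*))$ is a maximization, there is a feasible $\mathbf{u}'$ with $V_{\lambda^*}(\mathbf{u}')>V_{\lambda^*}(\mathbf{u}^*)$. Writing $s':=\mathbb{E}[x_T]|_{\mathbf{u}'}$ and adding back the correction,
\[
\tilde{U}(\mathbf{u}') = V_{\lambda^*}(\mathbf{u}') + g_{\lambda^*}(s') > V_{\lambda^*}(\mathbf{u}^*) + g_{\lambda^*}(s^*) = \tilde{U}(\mathbf{u}^*),
\]
where the strict inequality combines $V_{\lambda^*}(\mathbf{u}')>V_{\lambda^*}(\mathbf{u}^*)$ with $g_{\lambda^*}(s')\ge g_{\lambda^*}(s^*)$ from the minimizer property. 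This contradicts the optimality of $\mathbf{u}^*$ for $(\mathcal{MVS}(w,\gamma))$, so $\mathbf{u}^*\in\Pi_{\mathcal{A}}(w^c,\lambda^*)=\Pi_{\mathcal{A}}(w^c,d(\mathbf{u}^*))$.

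The main obstacle is not the contradiction step but pinning down the right scalar correction: one must verify that the weights $w_{ij}$ of $(\mathcal{A}(w^c,\lambda))$ coincide with the coefficients collected in \eqref{MVS_objective}, so that the two objectives genuinely differ only through $g_\lambda(\mathbb{E}[x_T])$, and then recognize that the convexity of $s\mapsto w s^2$ (the only surviving trace of the variance) together with the first-order matching $\lambda^*=1+2ws^*$ upgrades $s^*$ from a mere stationary point to a global minimizer. I would flag that the sign condition $w\ge 0$ is essential here: if $w$ were negative, $g_{\lambda^*}$ would be concave and the inequality $g_{\lambda^*}(s')\ge g_{\lambda^*}(s^*)$ would reverse, breaking this embedding direction.
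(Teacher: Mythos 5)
Your proof is correct and is, at its core, the same argument the paper relies on: the paper's proof simply defers to Theorem 1 of \cite{li2000optimal}, whose contradiction argument via the supporting-tangent (gradient) inequality for the convex function $\tilde{U}$ of the moments is exactly what your scalar correction repackages --- the inequality $g_{\lambda^*}(s)\ge g_{\lambda^*}(s^*)$ is precisely the tangent-line inequality for the convex function $ws^2+s$ at $s^*$, the linear-in-$\mathbb{E}[x_ix_j]$ terms cancelling identically by the construction of $w^c$. The only difference is presentational: yours is self-contained where the paper omits the details by citation.
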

\begin{proof}
As $\tilde{U}$ is convex w.r.t. $\mathbb{E}[x_ix_j]$, $1\leq i,j\leq T$, and $\mathbb{E}[x_T]$, the proof is similar to Theorem 1 in \cite{li2000optimal}. Thus, we omit the details here.
\end{proof}

The interpretation of Lemma \ref{MV_thm1} is similar to \cite{li2000optimal}, that is, in order to obtain the primal solution, the problem $(\mathcal{MVS}(w,\gamma))$ can be embedded into the auxiliary problem $(\mathcal{A}(w^c,\lambda))$. Moreover, the auxiliary problem in \cite{li2000optimal} is a special case of ours: only $\mathbb{E}[x^2_{T}]$ exists in the auxiliary problem of \cite{li2000optimal}, while all cross terms, $\mathbb{E}[x_ix_j],~1\leq i,j\leq T$, appear in our setting. What significantly distinguishes our auxiliary problem  from those in \cite{li2000optimal} and \cite{Discrete_Bankruptcy} is that $(\mathcal{A}(w^c,\lambda))$ in our case cannot be solved by DP anymore, since the smoothing introduces cross terms of wealth levels among different time stages. Before we demonstrate that $(\mathcal{A}(w^c,\lambda))$ can be solved by PHA, we need to prove first that $(\mathcal{A}(w^c,\lambda))$ satisfies the conditions in Theorem \ref{PHA_thm}. To see this, let us rewrite the auxiliary problem $(\mathcal{A}(w^c,\lambda))$ as the following equivalent compact form,
\begin{IEEEeqnarray*}{ccl}
~(\mathcal{A}(\mathbf{W},\lambda))~~ & \max\limits_{\mathbf{u}} & ~\mathbb{E}\left[-\mathbf{x}'\mathbf{W}\mathbf{x}+\lambda \mathbf{x}'\delta\right] \\
&{\rm s.t.} & ~\mathbf{x}=\mathbf{P}\mathbf{u}+x_0\mathbf{r},
\end{IEEEeqnarray*}
where $\mathbf{x}=(x_1,\ldots,x_T)'\in\mathbb{R}^T$, $\delta=(0,\ldots,0,1)'\in\mathbb{R}^T$, and
\begin{align*}
  \mathbf{W}
  &=(w_{ij})_{1\leq i,j\leq T}
  =\left(
  \begin{array}{cccc}
    \gamma-\frac{\gamma}{T} & -\frac{\gamma}{T} & \ldots & -\frac{\gamma}{T}\\
    -\frac{\gamma}{T} & \gamma-\frac{\gamma}{T} & \ldots & -\frac{\gamma}{T}\\
    \vdots & \vdots & \ddots & \vdots \\
    -\frac{\gamma}{T} & -\frac{\gamma}{T} & \ldots & w+\gamma-\frac{\gamma}{T}
  \end{array}
  \right)\in\mathbb{R}^{T\times T},\\
  \mathbf{P}&=\left(
  \begin{array}{cccc}
    {P}_0' & {0} & \ldots & {0}\\
    r_1{P}_0' & {P}_1' & \ldots & {0}\\
    r_2r_1{P}_0' & r_2{P}_1' & \ldots & {0}\\
    \vdots & \vdots  & \ddots & \vdots \\
    (\prod\limits_{t=1}^{T-1}r_t){P}_0' & (\prod\limits_{t=2}^{T-1}r_t){P}_1' &
    \ldots & {P}_{T-1}'
  \end{array}
  \right)\in\mathbb{R}^{T\times nT},\\
  \mathbf{r}&=\left(r_0,r_1r_0,\ldots,\prod\nolimits_{t=0}^{T-1}r_t\right)'\in\mathbb{R}^{T}.
\end{align*}
Given $w\geq 0$ and $\gamma\geq 0$, we have $\mathbf{x}'\mathbf{W}\mathbf{x}=w\mathbb{E}[x_T^2]+\gamma \mathbb{E}\left[\sum_{t=1}^{T}(x_t-\bar x)^2\right]\geq 0$ for any $\mathbf{x}$, thus the matrix $\mathbf{W}$ is positive semidefinite. Together with the fact that $\mathbf{x}$ is linear in $\mathbf{u}$, we conclude that each scenario subproblem of $(\mathcal{A}(\mathbf{W},\lambda))$ (with a certain realization on the matrix $\mathbf{P}$) is concave w.r.t. $\mathbf{u}$. Thus, $(\mathcal{A}(\mathbf{W},\lambda))$ satisfies the conditions in Theorem \ref{PHA_thm}. Notice that the structure of $(\mathcal{A}(\mathbf{W},\lambda))$ falls into the framework of the online QP discussed in Section \ref{QP_section}, except that the underlying systems dynamics are slightly different. Before presenting the solution algorithm, let us consider the condition under which the solution of $(\mathcal{A}(\mathbf{W},\lambda))$ also constitutes a solution to $(\mathcal{MVS}(w,\gamma))$.

\begin{thm}
  Suppose $\mathbf{u}^*\in \Pi_\mathcal{A}(w^c,\lambda^*)$. A necessary condition for $\mathbf{u}^*\in \Pi(w,\gamma)$ is $\lambda^*=1+2w\mathbb{E}[x_T]|_{\mathbf{u}^*}$.
  \label{MV_thm2}
\end{thm}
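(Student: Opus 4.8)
The plan is to collapse both the primal problem $(\mathcal{MVS}(w,\gamma))$ and the auxiliary problem $(\mathcal{A}(\mathbf{W},\lambda))$ onto a single scalar parameter, the terminal mean $\mu(\mathbf{u}):=\mathbb{E}[x_T]\big|_{\mathbf{u}}$, and then read off the multiplier from a first-order condition. Write $q(\mathbf{u}):=\mathbb{E}[\mathbf{x}'\mathbf{W}\mathbf{x}]=\sum_{1\le i,j\le T}w_{ij}\mathbb{E}[x_ix_j]$ for the quadratic penalty. Using the rearrangement \eqref{MVS_objective}, the primal objective is $F(\mathbf{u})=-q(\mathbf{u})+w\,\mu(\mathbf{u})^2+\mu(\mathbf{u})$, while the auxiliary objective of $(\mathcal{A}(w^c,\lambda))$ is $G_\lambda(\mathbf{u})=-q(\mathbf{u})+\lambda\,\mu(\mathbf{u})$; both depend on $\mathbf{u}$ only through the pair $(q,\mu)$ and are strictly decreasing in $q$. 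Since $\mathbf{x}=\mathbf{P}\mathbf{u}+x_0\mathbf{r}$ is affine in $\mathbf{u}$ in every scenario, $\mu$ is affine in $\mathbf{u}$ and $q$ is a convex (positive semidefinite) quadratic in $\mathbf{u}$. I therefore define the reduced penalty $\underline{q}(\mu):=\min\{q(\mathbf{u}):\mu(\mathbf{u})=\mu\}$, which is the partial minimization of a positive semidefinite quadratic over an affine slice and hence is itself a convex quadratic --- in particular a smooth --- function of the scalar $\mu$.

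With this reduction, any optimizer must attain $q=\underline q(\mu)$ for its own mean $\mu$ (otherwise lowering $q$ at fixed $\mu$ strictly improves both $F$ and $G_\lambda$). Thus $\mathbf{u}^*\in\Pi_\mathcal{A}(w^c,\lambda^*)$ forces $q^*=\underline q(\mu^*)$ and makes $\mu^*$ a maximizer of the concave map $\mu\mapsto-\underline q(\mu)+\lambda^*\mu$, which is equivalent to the subdifferential condition $\lambda^*\in\partial\underline q(\mu^*)$. On the other hand, since by hypothesis $\mathbf{u}^*\in\Pi(w,\gamma)$, Lemma \ref{MV_thm1} gives $\mathbf{u}^*\in\Pi_\mathcal{A}(w^c,d(\mathbf{u}^*))$ with $d(\mathbf{u}^*)=1+2w\mu^*$ from \eqref{U_to_EX}; by the same equivalence this yields $1+2w\mu^*\in\partial\underline q(\mu^*)$. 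Because $\underline q$ is smooth, $\partial\underline q(\mu^*)$ is the singleton $\{\underline q'(\mu^*)\}$, so the two memberships force $\lambda^*=\underline q'(\mu^*)=1+2w\mu^*$, which is the asserted necessary condition.

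The crux is the differentiability (equivalently, the single-valuedness of the subdifferential) of the reduced penalty $\underline q$ at $\mu^*$: it is exactly this smoothness that prevents two distinct multipliers $\lambda^*$ and $1+2w\mu^*$ from simultaneously certifying optimality of the common optimizer $\mathbf{u}^*$. Were $\underline q$ to exhibit a kink at $\mu^*$ --- a nondifferentiable trade-off frontier between terminal mean and the quadratic penalty --- the whole interval $[\underline q'_-(\mu^*),\underline q'_+(\mu^*)]$ of subgradients could serve as admissible multipliers and the conclusion would fail. I resolve this by invoking the quadratic structure of $(\mathcal{A}(\mathbf{W},\lambda))$: partial minimization of the positive semidefinite quadratic $q$ subject to the affine constraint $\mu(\mathbf{u})=\mu$ produces a value function that is quadratic, hence $C^\infty$, in $\mu$ (provided the mean is genuinely responsive to the control, i.e. $\mathbb{E}[P_t]\neq 0$ for some $t$, so that $\mu$ is nonconstant and $\mu^*$ is interior to its range). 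An equivalent route, avoiding the reduced function altogether, is a direct contradiction argument: assuming $\lambda^*\neq 1+2w\mu^*$ and combining the two auxiliary-optimality inequalities associated with the multipliers $\lambda^*$ and $1+2w\mu^*$ shows that a small feasible perturbation of $\mu^*$ strictly increases $F$, contradicting $\mathbf{u}^*\in\Pi(w,\gamma)$.
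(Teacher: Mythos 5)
Your proof is correct, and it takes a genuinely different route from the paper's. The paper proves this theorem by viewing the auxiliary solutions as a path parametrized by $\lambda$: using Lemma \ref{MV_thm1} it reduces the primal problem to maximizing $\tilde{U}$ over $\lambda$, writes the first-order condition \eqref{MV_condition1} by the chain rule, invokes the result of \cite{reid1971noninferior} on weighted-sum scalarizations to get the second stationarity condition \eqref{MV_condition2}, and subtracts the two to force $\lambda^*=1+2w\mathbb{E}[x_T]|_{\mathbf{u}^*}$. That argument implicitly assumes the moments $\mathbb{E}[x_ix_j(w^c,\lambda)]$ are differentiable in $\lambda$ along the solution path and that $d\mathbb{E}[x_T(w^c,\lambda)]/d\lambda\neq 0$ at $\lambda^*$, neither of which is verified. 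You instead collapse everything onto the mean--penalty frontier $\underline{q}(\mu)$, extract two subgradient certificates at the common optimizer (one from auxiliary optimality with weight $\lambda^*$, one from primal optimality via Lemma \ref{MV_thm1} with weight $d(\mathbf{u}^*)=1+2w\mu^*$), and conclude equality from single-valuedness of $\partial\underline{q}(\mu^*)$, which you justify by the parametric-QP fact that partial minimization of a positive semidefinite quadratic over affine slices yields a quadratic, hence differentiable, value function. What your route buys is self-containedness and rigor: no differentiation along an unexamined solution path, no external citation, and an explicit identification of the nondegeneracy needed ($\mathbb{E}[P_t]\neq 0$ for some $t$, without which $\mu$ is constant and the theorem as stated actually fails --- a case the paper's proof silently glosses over when it divides out the derivative term). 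What the paper's route buys is brevity and a form that generalizes verbatim to the non-quadratic embeddings of \cite{li2000optimal}, where the frontier need not be a parametric QP value function. One caveat: your closing remark that a direct contradiction argument "avoiding the reduced function altogether" is equivalent is overstated --- the two auxiliary-optimality inequalities by themselves only say that both $\lambda^*$ and $1+2w\mu^*$ lie in $\partial\underline{q}(\mu^*)$, which is perfectly consistent when the frontier has a kink, so any such argument must still smuggle in the quadratic structure; since your main argument does exactly that, this side remark does not affect the validity of the proof.
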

\begin{proof}
The solution set $\Pi_\mathcal{A}(w^c,\lambda)$ can be characterized by $\lambda$ when we fix $w^c$. Note that from Lemma \ref{MV_thm1} we have $\Pi(w,\gamma)\subseteq\cup_\lambda \Pi_\mathcal{A}(w^c,\lambda)$. Therefore, solving $(\mathcal{MVS}(w,\gamma))$ is equivalent to considering the following,
\begin{align*}
  &\max_\lambda~\tilde{U}(\mathbb{E}[x_i(w^c,\lambda)x_j(w^c,\lambda)],\forall i,j,\mathbb{E}[x_T(w^c,\lambda)])\nonumber\\
  ={}&\max_\lambda~-\left(\sum\nolimits_{1\leq i,j\leq T}w_{ij}\mathbb{E}[x_i(w^c,\lambda)x_j(w^c,\lambda)]\right)
  +w\mathbb{E}^2[x_T(w^c,\lambda)]
  +\mathbb{E}[x_T(w^c,\lambda)].
\end{align*}
The first-order necessary optimality condition for $\lambda^*$ is
\begin{align}
  &-\left(\sum\nolimits_{1\leq i,j\leq T}w_{ij}\frac{d\mathbb{E}[x_i(w^c,\lambda)x_j(w^c,\lambda)]}{d\lambda}\bigg|_{\lambda^*}\right)
  +(1+2w\mathbb{E}[x_T]|_{\mathbf{u}^*})\frac{d\mathbb{E}[x_T(w^c,\lambda)]}{d\lambda}\bigg|_{\lambda^*}=0.
  \label{MV_condition1}
\end{align}
On the other hand, as $\mathbf{u}^*\in \Pi_\mathcal{A}(w^c,\lambda^*)$, we have the following according to \cite{reid1971noninferior},
\begin{align}
  &-\left(\sum\nolimits_{1\leq i,j\leq T}w_{ij}\frac{d\mathbb{E}[x_i(w^c,\lambda)x_j(w^c,\lambda)]}{d\lambda}\bigg|_{\lambda^*}\right)
  +\lambda^*\frac{d\mathbb{E}[x_T(w^c,\lambda)]}{d\lambda}\bigg|_{\lambda^*}=0.
  \label{MV_condition2}
\end{align}
Combining (\ref{MV_condition1}) and (\ref{MV_condition2}), the vector $(-(w^c)',\lambda^*)'$ should be proportional to the vector $(-(w^c)',1+2w\mathbb{E}[x_T]|_{\mathbf{u}^*})'$, thus we must have $\lambda^*=1+2w\mathbb{E}[x_T]|_{\mathbf{u}^*}$.
\end{proof}

We now apply our scenario-decomposition solution method to solve $(\mathcal{A}(\mathbf{W},\lambda))$ for a given $\lambda$. We first deal with individual scenario subproblems in their equivalent minimization forms,
\begin{IEEEeqnarray*}{ccl}
 ~(\mathcal{A}^i(\mathbf{W},\lambda))~~ & \min\limits_{\mathbf{u}} & ~\mathbf{x}'\mathbf{W}\mathbf{x}-\lambda \mathbf{x}'\delta \\
 & {\rm s.t.} & ~\mathbf{x}=\mathbf{P}^i\mathbf{u}+x_0\mathbf{r},
\end{IEEEeqnarray*}
where $\mathbf{P}^i$ is the realization of $\mathbf{P}$ under the scenario $i\in\mathcal{I}$.
Although $(\mathbf{P}^i)'\mathbf{W}\mathbf{P}^i$ could be singular for some $i$, we could always leverage on any convex optimization algorithm to find the global optima of $(\mathcal{A}^i(\mathbf{W},\lambda))$, which are denoted by $\mathbf{u}^{i,0}$. We next consider its augmented Lagrangian at the iteration $\nu$,
\begin{IEEEeqnarray*}{ccl}
 ~(\mathcal{A}^{i,\nu}(\mathbf{W},\lambda))~~ & \min\limits_{\mathbf{u}} & ~\mathbf{x}'\mathbf{W}\mathbf{x}-\lambda \mathbf{x}'\delta + \mathbf{u}'\mathbf{w}^{i,\nu}
 +\frac{1}{2}\alpha|\mathbf{u}-\hat{\mathbf{u}}^{i,\nu}|_2^2 \\
 & {\rm s.t.} & ~\mathbf{x}=\mathbf{P}^i\mathbf{u}+x_0\mathbf{r},
\end{IEEEeqnarray*}
and the optimal solution of $(\mathcal{A}^{i,\nu}(\mathbf{W},\lambda))$, denoted by $\mathbf{u}^{i,\nu+1}$, can always be analytically obtained as
\begin{align}
  \mathbf{u}^{i,\nu+1}={}&-[2(\mathbf{P}^i)'\mathbf{W}\mathbf{P}^i+\alpha I_{nT}]^{-1}
  [2x_0(\mathbf{P}^i)'\mathbf{W}\mathbf{r}-\lambda(\mathbf{P}^i)'\delta +\mathbf{w}^{i,\nu}
  -\alpha\hat{\mathbf{u}}^{i,\nu}],
\end{align}
for some given $\hat{\mathbf{u}}^{i,\nu}$ and $\mathbf{w}^{i,\nu}$. Then the new implementable policy $\hat{\mathbf{u}}^{i,\nu+1}$ is calculated according to (\ref{conditional_expectation}) or following the projection procedure from (\ref{nu+1_sol}) to (\ref{nu+1_project2}). The iteration process continues until the stopping condition in (\ref{Q_stopping criterion}) is satisfied. We finally obtain the optimal solution of $(\mathcal{A}(\mathbf{W},\lambda))$ for a certain $\lambda\in\mathbb{R}$. Now we need to design a solution method to find the optimal $\lambda^*$. When the optimal solution of $(\mathcal{A}(\mathbf{W},\lambda))$ can be expressed in a function form of $\lambda$,  we can substitute it back to $(\mathcal{MVS}(w,\gamma))$ and find the optimal $\lambda^*$ such that $\tilde{U}$ is maximized. In the current situation, however, as PHA does not yield an analytical solution, we need to invoke a heuristic method to carry out the job.

Theorem \ref{MV_thm2} reveals the connection between $\lambda^*$ and $\mathbf{u}^*$ as the optimal solution to \emph{both} $(\mathcal{A}(\mathbf{W},\lambda^*))$ and $(\mathcal{MVS}(w,\gamma))$. In fact, we could rely on this relationship to narrow down the possible range of $\lambda^*$. More precisely, the analysis of the previous subsection
indicates that there is a sacrifice on the expected final wealth when we consider the smoothing. Then according to Theorem $\ref{MV_thm2}$, we could set the upper bound of $\lambda^*$ as
\begin{align}
  \lambda_{\text{max}}=1+2w\mathbb{E}[x_T]|_{\mathbf{u}^{\text{ns}}},
  \label{lambda_max}
\end{align}
where $\mathbf{u}^{\text{ns}}=\{u_t^{\text{ns}}\}_t$ denotes the optimal policy of the classical dynamic MV model with no smoothing term given in (\ref{MV_policy}). On the other hand, we could anticipate that the expected terminal wealth, under an optimal policy in the dynamic MV model with a smoothing term, should be larger than the initial wealth. Thus, we set the lower bound by
\begin{eqnarray}
  \lambda_{\text{min}}=1+2w x_0.
  \label{lambda_min}
\end{eqnarray}
In summary, we claim that $\lambda^*\in[\lambda_{\text{min}},\lambda_{\text{max}}]$. Within this specified range, we could use a line search method to efficiently find the optimal value of $\lambda$ and hence the optimal policy of the primal problem $(\mathcal{MVS}(w,\gamma))$. We summarize our search procedure in details in Algorithm \ref{lambda_algo}. From our extensive experimental studies, we essentially find that the value of $\tilde{U}$ is always concave w.r.t. $\lambda$. This phenomenon was also analytically found in \cite{li2000optimal}. Therefore, we add one more step to fit a quadratic function in the algorithm in order to enhance the accuracy of the ordinary line search. We discuss a case study of a dynamic MV problem with a smoothing term in the following to complete this subsection. 

\begin{algorithm}[!t]
\caption{Find $\lambda^*$ of $(\mathcal{A}(\mathbf{W},\lambda))$ and $\mathbf{u}^*$ of 
$(\mathcal{MVS}(w,\gamma))$}
\label{lambda_algo}
\textbf{Input}: The parameters $w$ and $\gamma$, the distribution $D_{e_t}$ and the risk-free rate for $t=0,1,\ldots,T-1$, the initial wealth $x_0$, the penalty $\alpha$, the tolerance $\epsilon$, and the step size $\theta$.\\
\textbf{Output}: $\lambda^*$ of $(\mathcal{A}(\mathbf{W},\lambda))$ and 
$\mathbf{u}^*$ of $(\mathcal{MVS}(w,\gamma))$.\\
\vspace{-0.4cm}
\begin{algorithmic}[1] 
\STATE Decide $\lambda_{\text{max}}$ by (\ref{lambda_max}) and $\lambda_{\text{min}}$ by (\ref{lambda_min}). Let $\kappa=0$.
\STATE Set $\lambda^\kappa=\lambda_{\text{min}}+\kappa\theta$, and solve $(\mathcal{A}(\mathbf{W},\lambda^\kappa))$, and denote the optimal solution by $\mathbf{u}_\mathcal{A}(\lambda^\kappa)$.
\STATE Compute $\tilde{U}|_{\mathbf{u}_\mathcal{A}(\lambda^\kappa)}$ of (\ref{MVS_objective}) and denote its value by $\tilde{U}(\lambda^\kappa)$.
\STATE If $\lambda^\kappa=\lambda_{\text{max}}$, then stop; else, $\kappa~\leftarrow~\kappa+1$, and go back to Step 2.
\STATE Fit the dataset $\{\lambda^\kappa,\tilde{U}(\lambda^\kappa)\}_{\kappa}$ into a quadratic function (a downward parabola in $\mathbb{R}^2$) and find its optimal solution denoted by $\lambda^{\text{fit}}$.
\STATE Finally, $\lambda^*\in\argmax \{\tilde{U}(\lambda):
\lambda\in\{\lambda^\kappa,\forall\kappa\}\cup\{\lambda^{\text{fit}}\}\}$ and hence $\mathbf{u}^*=\mathbf{u}_\mathcal{A}(\lambda^*)$.
\end{algorithmic}
\end{algorithm}

\begin{exam}\label{MV_eg}
Let us consider a market with the following expectation vector and covariance matrix of the random total return $e_t\in\mathbb{R}^3$, which has been investigated in \cite{li2000optimal},
\begin{align*}
 \mathbb{E}[{e}_t]=\left(
 \begin{array}{c}
 1.162\\
 1.246\\
 1.228
 \end{array}
 \right),~
 cov({e}_t)=\left[
 \begin{array}{ccc}
   0.0146 & 0.0187 & 0.0145 \\
   0.0187 & 0.0854 & 0.0104 \\
   0.0145 & 0.0104 & 0.0289
 \end{array}
 \right].
\end{align*}
We randomly generate discrete distributions in this example to match exactly the above two moments (and, at the same time, prevent arbitrage opportunities as a conventional assumption in the finance literature), so that we could easily verify pros and cons of adding a smoothing term when we compare it with the classical results in \cite{li2000optimal}. More precisely, we begin with an initial wealth $x_0=10$, an investment horizon $T=3$, and one risk-free bond with the total rate $r_t=1.04$. Suppose that $e_t$ follows different uniform distributions at different time $t=0,1,2$ (but independent across time stages), which are given below,
\begin{align*}
e_0\in&
\left\{
\left(
\begin{array}{c}
    1.2722 \\
    1.4294 \\
    1.3126
\end{array}
\right),
\left(
\begin{array}{c}
1.3352 \\
1.4018 \\
1.2519
\end{array}
\right),
\left(
\begin{array}{c}
1.0996  \\
1.3859  \\
1.2868
\end{array}
\right),
\left(
\begin{array}{c}
0.9448    \\
0.6111    \\
0.8722
\end{array}
\right),
\left(
\begin{array}{c}
1.1904   \\
0.8172   \\
1.4877
\end{array}
\right),\right.\\
&\left.\left(
\begin{array}{c}
1.0606   \\
1.1211   \\
1.2403
\end{array}
\right),
\left(
\begin{array}{c}
1.0363   \\
1.4716   \\
1.0339
\end{array}
\right),
\left(
\begin{array}{c}
1.3191    \\
1.4242    \\
1.4224
\end{array}
\right),
\left(
\begin{array}{c}
1.1968   \\
1.5481   \\
1.1376
\end{array}
\right),
\left(
\begin{array}{c}
1.1649\\
1.2495\\
1.2346
\end{array}
\right)
\right\},~|D_{e_0}|=10,
\end{align*}
\begin{align*}
e_1\in&\left\{
\left(
\begin{array}{c}
    1.3056  \\
    1.2997  \\
    1.4462
\end{array}
\right),
\left(
\begin{array}{c}
      1.1498  \\
      1.4673  \\
     1.0048
\end{array}
\right),
\left(
\begin{array}{c}
      1.0833  \\
     0.9035  \\
     1.2252
\end{array}
\right),
\left(
\begin{array}{c}
     0.9665 \\
      0.7577 \\
     0.9926
\end{array}
\right),
\left(
\begin{array}{c}
   1.2876  \\
       1.5520   \\
       1.2225
\end{array}
\right),\right.\\
&\left.
\left(
\begin{array}{c}
  1.2733  \\
     1.1900   \\
     1.4485
\end{array}
\right),
\left(
\begin{array}{c}
      1.0679\\
     1.5517\\
      1.2561
\end{array}
\right)
\right\},~|D_{e_1}|=7,
\end{align*}
\begin{align*}
e_2\in&\left\{
\left(
\begin{array}{c}
    1.0724 \\
    0.7472 \\
    1.1059
\end{array}
\right),
\left(
\begin{array}{c}
       1.0976  \\
      1.0795  \\
       1.3050
\end{array}
\right),
\left(
\begin{array}{c}
      1.3114  \\
    1.5110 \\
       1.2140
\end{array}
\right),
\left(
\begin{array}{c}
     1.3031    \\
      1.4376  \\
      1.5043
\end{array}
\right),
\left(
\begin{array}{c}
    1.0255\\
      1.4547\\
     1.0109
\end{array}
\right)
\right\},~|D_{e_2}|=5.
\end{align*}
Therefore, we have $|\mathcal{I}|=350$ scenarios. The corresponding scenario tree and scenario partitions and bundles can also be easily constructed and obtained so that we omit the details here due to the space limit. We then solve $(\mathcal{MVS}(w,\gamma))$ by the procedure introduced in this subsection for $\gamma=1$ but with different $w=0.5,1,5$, respectively, and obtain the optimal allocation, denoted by $\{\hat{\mathbf{u}}^{i,\infty}(w,\gamma)\}_i$, and calculate the wealth trajectory $\mathbf{x}^{i,\infty}(w,\gamma)$ under $\hat{\mathbf{u}}^{i,\infty}(w,\gamma)$ for all the possible scenarios. We also obtain the optimal policy ${\mathbf{u}}^{*}(x;w)$ of $(\mathcal{MV}(w))$ for the same $w$'s based on (\ref{MV_policy}) and calculate the corresponding wealth trajectories $\mathbf{x}^{i,DP}(w)$ under all circumstances starting from $x_0$.

The statistical results are listed in Table \ref{MV_table} (rounding in four decimals if needed). It is obvious that, in general, taking smoothing into account facilitates investors to better manage their intermediate wealth fluctuations, and this can be seen from the much lower variances under $(\mathcal{MVS}(w,\gamma))$ across all $w$'s considered, compared to those under $(\mathcal{MV}(w))$. Similar to the utility framework, there is also a sacrifice on the expected terminal wealth in $(\mathcal{MVS}(w,\gamma))$ at all levels of $w$. What different from the utility case is that the bankruptcy almost disappears in the current MV example when we set the bankruptcy boundary at $x_t^b=0$ for all $t$ (except for a very small positive bankruptcy rate 0.0143 when $t=2$ under $\mathcal{MV}(0.5)$). It seems plausible that smoothing has little to do on controlling the bankruptcy rate in MV models. However, we claim that smoothing is still a better choice if the investor really cares about the worst case. To see this, suppose the investor does not consider smoothing. Although she could still achieve relatively good management for extreme situations through increasing $w$ (i.e., emphasizing more on the variance part) in $\mathcal{MV}(w)$ (and this is evidenced from the worst-case column from $\mathcal{MV}(0.5)$ to $\mathcal{MV}(5)$ in Table \ref{MV_table}), it costs her nearly a half drop on the expected terminal wealth (from $25.1709$ to $12.6409$ in our experiments). On the other hand, smoothing not only brings better worst cases at every level of $w$, but also makes the investor only suffer a quite mild loss on her expected final wealth (from $13.3638$ to $11.8048$ shown under $\mathcal{MVS}(w,\gamma)$'s).

   \begin{table}[t]
     \centering
     \caption{Statistics of Wealth Levels under $\mathcal{MVS}(w,\gamma)$ and $\mathcal{MV}(w)$ with $w=0.5,1,5$ and $\gamma=1$}
     \resizebox{\columnwidth}{!}{%
       \begin{tabular}{cccccccccc}
       \toprule
       \multirow{2}[2]{*}{$t$} & \multicolumn{4}{c}{$\mathcal{MVS}(0.5,1)$} & & \multicolumn{4}{c}{$\mathcal{MV}(0.5)$} \\
\cmidrule{2-5}\cmidrule{7-10} & $\mathbb{E}[x_t^{i,\infty}]$  & Var$(x_t^{i,\infty})$ & $BR_t$ & Worst case & & $\mathbb{E}[x_t^{i,DP}]$& Var$(x_t^{i,DP})$ & $BR_t$ & Worst case \\
       \midrule
       1  & 12.3502 & 2.8302 & 0  & 7.8774 && 18.5926 & 45.9106 & 0  & 1.0500 \\
       2  & 12.8505 & 2.0145 & 0  & 7.5099 && 22.7971 & 28.3624 & 0.0143 & -6.3719 \\
       3  & 13.3638 & 1.3668 & 0  & 7.4497 && 25.1709 & 13.9223 & 0  & -7.4081 \\
       \midrule
       \midrule
       \multirow{2}[2]{*}{$t$} & \multicolumn{4}{c}{$\mathcal{MVS}(1,1)$} & & \multicolumn{4}{c}{$\mathcal{MV}(1)$} \\
\cmidrule{2-5}\cmidrule{7-10} & $\mathbb{E}[x_t^{i,\infty}]$  & Var$(x_t^{i,\infty})$ & $BR_t$ & Worst case & & $\mathbb{E}[x_t^{i,DP}]$& Var$(x_t^{i,DP})$ & $BR_t$ & Worst case \\
       \midrule
       1  & 11.6889 & 1.2014 & 0  & 8.7909 && 14.4963 & 11.4776 & 0  & 5.7250 \\
       2  & 12.1788 & 0.7319 & 0  & 8.5394 && 16.8066 & 7.0906 & 0  & 2.2220 \\
       3  & 12.6825 & 0.4080 & 0  & 8.6301 && 18.2098 & 3.4806 & 0  & 1.9203 \\
       \midrule
       \midrule
       \multirow{2}[2]{*}{$t$} & \multicolumn{4}{c}{$\mathcal{MVS}(5,1)$} & & \multicolumn{4}{c}{$\mathcal{MV}(5)$} \\
\cmidrule{2-5}\cmidrule{7-10} & $\mathbb{E}[x_t^{i,\infty}]$  & Var$(x_t^{i,\infty})$ & $BR_t$ & Worst case & & $\mathbb{E}[x_t^{i,DP}]$& Var$(x_t^{i,DP})$ & $BR_t$ & Worst case \\
       \midrule
       1  & 10.9083 & 0.1788 & 0  & 9.8317 && 11.2193 & 0.4591 & 0  & 9.4650 \\
       2  & 11.3420 & 0.0823 & 0  & 9.9199 && 12.0141 & 0.2836 & 0  & 9.0972 \\
       3  & 11.8048 & 0.0392 & 0  & 10.3300 && 12.6409 & 0.1392 & 0  & 9.3830 \\
       \bottomrule
       \end{tabular}%
       }%
     \label{MV_table}%
   \end{table}%

\end{exam}

\section{Conclusion}

Stochastic control problems can be in general classified into two categories: separable and nonseparable. The former class can be solved by dynamic programming (DP), at least theoretically. For the latter one, however, no general solution framework has been developed so far in the literature. Recognizing the applicability of progressive hedging algorithm (PHA) in dealing with nonseparable stochastic control problems, we develop in this paper the scenario decomposition solution framework to fill in the gap. To the best of our knowledge, this is the first attempt in the literature  to solve nonseparable stochastic control problems under a general framework. We believe that our new development will greatly extend the reach of the stochastic control. Our results in the online quadratic programming and dynamic portfolio selections with smoothing properties clearly demonstrate the applicabilities of the scenario decomposition approach when the time decomposition methodology inherent in DP fails. We would like to point out one future research direction: While the curse of dimensionality blocks DP from solving relatively large-scale problems, the curse of dimensionality also affects negatively the performance of PHA, especially due to the model assumption of a tree structure.


\newpage
\bibliography{Nonsep_PHA}

\begin{thebibliography}{}

\bibitem[Agrawal et~al., 2014]{Online_LP}
Agrawal, S., Wang, Z., and Ye, Y. (2014).
\newblock A dynamic near-optimal algorithm for online linear programming.
\newblock {\em Operations Research}, 62(4):876--890.

\bibitem[Bellman, 1952]{Bellman1952}
Bellman, R. (1952).
\newblock On the theory of dynamic programming.
\newblock {\em Proceedings of the National Academy of Sciences of the United
  States of America}, 38(8):718--719.

\bibitem[Bertsekas, 2017]{bertsekas2017dynamic}
Bertsekas, D.~P. (2017).
\newblock {\em Dynamic Programming and Optimal Control}.
\newblock Athena Scientific.

\bibitem[Bielecki et~al., 2005]{Continuous_Bankruptcy}
Bielecki, T.~R., Jin, H., Pliska, S.~R., and Zhou, X.~Y. (2005).
\newblock Continuous‐time mean-variance portfolio selection with bankruptcy
  prohibition.
\newblock {\em IEEE Transactions on Automatic Control}, 15(2):213--244.

\bibitem[Carpentier et~al., 2013]{carpentier2013long}
Carpentier, P.-L., Gendreau, M., and Bastin, F. (2013).
\newblock Long-term management of a hydroelectric multireservoir system under
  uncertainty using the progressive hedging algorithm.
\newblock {\em Water Resources Research}, 49(5):2812--2827.

\bibitem[Carraway et~al., 1990]{carraway1990generalized}
Carraway, R.~L., Morin, T.~L., and Moskowitz, H. (1990).
\newblock Generalized dynamic programming for multicriteria optimization.
\newblock {\em European Journal of Operational Research}, 44(1):95--104.

\bibitem[Cui et~al., 2014]{cui2014optimal}
Cui, X., Gao, J., Li, X., and Li, D. (2014).
\newblock Optimal multi-period mean--variance policy under no-shorting
  constraint.
\newblock {\em European Journal of Operational Research}, 234(2):459--468.

\bibitem[Domingo and Sniedovich, 1993]{domingo1993experiments}
Domingo, A. and Sniedovich, M. (1993).
\newblock Experiments with dynamic programming algorithms for nonseparable
  problems.
\newblock {\em European Journal of Operational Research}, 67(2):172--187.

\bibitem[dos Santos et~al., 2009]{dos2009practical}
dos Santos, M.~L., da~Silva, E.~L., Finardi, E.~C., and Gon{\c{c}}alves, R.~E.
  (2009).
\newblock Practical aspects in solving the medium-term operation planning
  problem of hydrothermal power systems by using the progressive hedging
  method.
\newblock {\em International Journal of Electrical Power \& Energy Systems},
  31(9):546--552.

\bibitem[Li, 1990]{li1990multiple}
Li, D. (1990).
\newblock Multiple objectives and non-separability in stochastic dynamic
  programming.
\newblock {\em International Journal of Systems Science}, 21(5):933--950.

\bibitem[Li and Haimes, 1987]{Envelope}
Li, D. and Haimes, Y.~Y. (1987).
\newblock The envelope approach for multiobjective optimization problems.
\newblock {\em IEEE Transactions on Systems, Man, and Cybernetics},
  17(6):1026--1038.

\bibitem[Li and Haimes, 1990]{li1990new}
Li, D. and Haimes, Y.~Y. (1990).
\newblock New approach for nonseparable dynamic programming problems.
\newblock {\em Journal of Optimization Theory and Applications},
  64(2):311--330.

\bibitem[Li and Haimes, 1991]{li1991extension}
Li, D. and Haimes, Y.~Y. (1991).
\newblock Extension of dynamic programming to nonseparable dynamic optimization
  problems.
\newblock {\em Computers \& Mathematics with Applications}, 21(11-12):51--56.

\bibitem[Li and Ng, 2000]{li2000optimal}
Li, D. and Ng, W.-L. (2000).
\newblock Optimal dynamic portfolio selection: Multiperiod mean-variance
  formulation.
\newblock {\em Mathematical Finance}, 10(3):387--406.

\bibitem[Li et~al., 2003]{li2003variance}
Li, D., Qian, F., and Fu, P. (2003).
\newblock Variance minimization in stochastic systems.
\newblock In {\em Stochastic Modeling and Optimization}, pages 311--332.
  Springer.

\bibitem[Li and Schmidt, 1997]{li1997cost}
Li, D. and Schmidt, C.~W. (1997).
\newblock Cost smoothing in discrete-time linear-quadratic control.
\newblock {\em Automatica}, 33(3):447--452.

\bibitem[Reid and Citron, 1971]{reid1971noninferior}
Reid, R. and Citron, S. (1971).
\newblock On noninferior performance index vectors.
\newblock {\em Journal of Optimization Theory and Applications}, 7(1):11--28.

\bibitem[Rockafellar and Wets, 1991]{rockafellar1991scenarios}
Rockafellar, R.~T. and Wets, R. J.-B. (1991).
\newblock Scenarios and policy aggregation in optimization under uncertainty.
\newblock {\em Mathematics of Operations Research}, 16(1):119--147.

\bibitem[Shim, 1983]{QP_Survey}
Shim, J.~K. (1983).
\newblock A survey of quadratic programming applications to business and
  economics.
\newblock {\em International Journal of Systems Science}, 14(1):105--115.

\bibitem[Sniedovich, 1986]{sniedovich1986c}
Sniedovich, M. (1986).
\newblock C-programming and the minimization of pseudolinear and additive
  concave functions.
\newblock {\em Operations Research Letters}, 5(4):185--189.

\bibitem[Sniedovich, 1987]{sniedovich1987class}
Sniedovich, M. (1987).
\newblock A class of nonseparable dynamic programming problems.
\newblock {\em Journal of Optimization Theory and Applications},
  52(1):111--121.

\bibitem[Sniedovich and Vazirinejad, 1990]{sniedovich1990solution}
Sniedovich, M. and Vazirinejad, S. (1990).
\newblock A solution strategy for a class of nonlinear knapsack problems.
\newblock {\em American Journal of Mathematical and Management Sciences},
  10(1-2):51--71.

\bibitem[White, 1974]{white1974dynamic}
White, D. (1974).
\newblock Dynamic programming and probabilistic constraints.
\newblock {\em Operations Research}, 22(3):654--664.

\bibitem[Zhu et~al., 2004]{Discrete_Bankruptcy}
Zhu, S., Li, D., and Wang, S. (2004).
\newblock Risk control over bankruptcy in dynamic portfolio selection: A
  generalized mean-variance formulation.
\newblock {\em IEEE Transactions on Automatic Control}, 57(5):1155--1168.

\end{thebibliography}

\end{document}